\newtheorem{theorem}{Theorem}[section]
\newtheorem{lemma}[theorem]{Lemma}
\newtheorem{prop}[theorem]{Proposition}
\newtheorem{conj}[theorem]{Conjecture}
\begin{document}

\title{Italian Domination of Cartesian Products of Directed Cycles}
\author{Christopher M. van Bommel\footnote{Supported by a Pacific Institute for the Mathematical Sciences Post-doctoral Fellowship.}\\Department of Mathematics\\University of Manitoba\\Winnipeg, MB, Canada\\\texttt{Christopher.VanBommel@umanitoba.ca}}

\maketitle

\begin{abstract}
An Italian dominating function on a (di)graph $G$ with vertex set $V(G)$ is a function $f: V(G) \to \{0, 1, 2\}$ such that every vertex $v \in V(G)$ such that $f(v) = 0$ has an (in)neighbour assigned 2 or two (in)neighbours assigned 1.  We complete the investigation of the Italian domination numbers of Cartesian products of directed cycles.
\end{abstract}

\section{Introduction}

A \emph{dominating set} $S$ of a graph $G$ is a subset of the vertices of $G$ such that every vertex not in $S$ has a neighbour in $S$.  The \emph{domination number} $\gamma(G)$ is the minimum cardinality of a dominating set in $G$.  A \emph{2-dominating set} $S$ of a graph $G$, introduced by Fink and Jacobson~\cite{FJ85}, is a subset of the vertices of $G$ such that every vertex not in $S$ has two neighbours in $S$.  The \emph{2-domination number} $\gamma_2(G)$ is the minimum cardinality of a 2-dominating set in $G$.

A \emph{Roman dominating function} of a graph $G$, introduced by Cockayne et al.~\cite{CDHH04} is a function $f: V(G) \to \{0, 1, 2\}$ such that for every $v \in V(G)$ with $f(v) = 0$, it is adjacent to a vertex $w$ with $f(w) = 2$.  The \emph{Roman domination number} $\gamma_R(G)$ is the minimum weight, i.e.\ $\sum_{v \in V(G)} f(v)$, of a Roman dominating function on $G$.  A \emph{weak Roman dominating function} of a graph $G$, introduced by Henning and Hedetniemi~\cite{HH03} is a function $f: V(G) \to \{0, 1, 2\}$ such that for every $v \in V(G)$ with $f(v) = 0$, it is adjacent to a vertex $w$ with $f(w) \neq 0$, and the derived function $g : V(G) \to \{0, 1, 2\}$ with $g(v) = 1$, $g(w) = f(w) - 1$, and $g(x) = f(x)$ for all $x \neq v, w$ is such that $g$ is a dominating function.  The \emph{weak Roman domination number} $\gamma_r(G)$ is the minimum weight of a weak Roman dominating function.

A \emph{2-rainbow dominating function} of a graph $G$, introduced by Bre\v{s}ar et al.~\cite{BHR08}, is a function $f: V(G) \to \mathcal{P}(\{1, 2\})$ such that for every $v \in V(G)$ with $f(v) = \emptyset$, it is adjacent to a vertex $w$ with $1 \in f(w)$ and it is adjacent to a vertex $x$ with $2 \in f(x)$.  The \emph{2-rainbow domination number} $\gamma_{r2}(G)$ is the minimum weight, i.e.\ $\sum_{v \in V(G)} |f(v)|$, of a 2-rainbow dominating function.

A \emph{\{2\}-dominating function} of a graph $G$, introduced by Domke et al.~\cite{DHLF91}, is a function $f: V(G) \to \{0, 1, 2\}$ such that for every $v \in V(G)$, its closed neighbourhood has weight at least two.  The \emph{\{2\}-domination number} $\gamma_{\{2\}}(G)$ is the minimum weight of a \{2\}-dominating function.

An \emph{Italian dominating function}, or \emph{Roman \{2\}-dominating function} of a graph $G$, introduced by Chellali et al.~\cite{CHHM15} is a function $f: V(G) \to \{0, 1, 2\}$ such that for every $v \in V(G)$ with $f(v) = 0$, it is adjacent to a vertex $w$ with $f(w) = 2$ or two vertices $x_1, x_2$ such that $f(x_1) = f(x_2) = 1$.  The \emph{Italian domination number} $\gamma_I(G)$ is the minimum weight of a Italian dominating function.

Chellali et al.~\cite{CHHM15} derived the following chains of inequalities relating these domination parameters.

\begin{theorem} \cite{CHHM15}
For every graph $G$,
\[
\gamma(G) \le \gamma_r(G), \le \gamma_I(G), \gamma_{r2}(G) \le \gamma_R(G) \le 2 \gamma(G), \qquad \gamma_I(G) \le \gamma_2(G).
\]
\end{theorem}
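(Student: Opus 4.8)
The plan is to establish each link of the chain independently, exploiting the fact that these parameters are defined by progressively more restrictive covering conditions: in almost every case an optimal object for one parameter is already feasible for a weaker one, with the same (or smaller) weight, so no new construction is needed. I would therefore not seek a single unified argument but rather verify the directional implications $\gamma \le \gamma_r$, $\gamma_r \le \gamma_I$, $\gamma_r \le \gamma_{r2}$, $\gamma_I \le \gamma_R$, $\gamma_{r2} \le \gamma_R$, $\gamma_R \le 2\gamma$, and $\gamma_I \le \gamma_2$ one at a time.

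First I would dispatch the three comparisons anchored by ordinary domination. For $\gamma_R(G) \le 2\gamma(G)$, take a minimum dominating set $S$ and set $f(v) = 2$ on $S$ and $0$ elsewhere; domination of $S$ immediately yields the Roman condition, and $w(f) = 2\gamma(G)$. For $\gamma_I(G) \le \gamma_2(G)$, take a minimum $2$-dominating set $S$ and set $f(v) = 1$ on $S$ and $0$ elsewhere; a vertex outside $S$ has two neighbours in $S$, i.e.\ two neighbours of value $1$, which is exactly the Italian requirement, and $w(f) = \gamma_2(G)$. For $\gamma(G) \le \gamma_r(G)$, take an optimal weak Roman function $f$ and let $S = \{v : f(v) \ge 1\}$; every $0$-vertex has a nonzero neighbour, so $S$ dominates, and $|S| \le w(f)$.

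Next come the two comparisons with the Roman number, both obtained by reinterpreting a Roman function $f$ without altering weight. A Roman function is literally an Italian function, since a $0$-vertex adjacent to a $2$-vertex already meets the Italian requirement, giving $\gamma_I(G) \le \gamma_R(G)$ with no change at all. For $\gamma_{r2}(G) \le \gamma_R(G)$, I would map $f$ to the rainbow function $g$ defined by $g(v) = \emptyset$, $g(v) = \{1\}$, $g(v) = \{1,2\}$ according as $f(v) = 0, 1, 2$; then $|g(v)| = f(v)$ preserves weight, and any $0$-vertex adjacent to a $2$-vertex sees both colours $1$ and $2$ at that single neighbour.

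The genuinely delicate links are $\gamma_r(G) \le \gamma_I(G)$ and $\gamma_r(G) \le \gamma_{r2}(G)$, and I expect these to be the main obstacle, because the weak Roman condition demands not merely a nonzero neighbour but that the \emph{derived} function, obtained by pushing one unit from some neighbour $w$ onto a $0$-vertex $v$, remain dominating. The key observation that resolves both is that the ``double coverage'' built into Italian and rainbow domination furnishes a spare unit. If $v$ is covered by a single $2$-neighbour $w$, I push one unit from $w$ onto $v$, leaving $g(w) = 1$, so no vertex loses a positive dominator. If instead $v$ is covered by two value-$1$ neighbours $x_1, x_2$ (respectively a colour-$1$ neighbour $w_1$ and a colour-$2$ neighbour $w_2$ in the rainbow case, where I set $h(v) = |f(v)|$), I push one unit from one of them, say $x_1$, and must check that no other vertex relied on $x_1$ alone. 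Here the Italian or rainbow axiom applied to any other $0$-vertex $u$ guarantees that $u$ retained a second positive neighbour (an oppositely coloured one in the rainbow setting, which cannot be the emptied $x_1$ since $x_1$ carried only the one colour), so $u$ survives, while $v$ itself now dominates the emptied $x_1$. Carefully organizing these case distinctions — and in particular checking that the collapse $h(v) = |f(v)|$ turns a rainbow function into a weak Roman function of equal weight — is the crux; once it is verified, the chain is complete.
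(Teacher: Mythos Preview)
The paper does not prove this theorem at all: it is quoted from Chellali et al.\ \cite{CHHM15} and stated without argument, so there is no ``paper's own proof'' to compare against. Your plan is therefore being measured only against correctness, and on that score it is sound. Each of the easy links ($\gamma_R \le 2\gamma$, $\gamma_I \le \gamma_2$, $\gamma \le \gamma_r$, $\gamma_I \le \gamma_R$, $\gamma_{r2} \le \gamma_R$) is handled by the obvious reinterpretation, and for the two delicate links $\gamma_r \le \gamma_I$ and $\gamma_r \le \gamma_{r2}$ your ``spare unit'' observation is exactly the right mechanism: after pushing a unit onto $v$, the only vertex that can lose its dominator is one whose sole positive neighbour was the emptied $x_1$, and the Italian/rainbow hypothesis rules that out (the second $1$-neighbour, respectively the oppositely coloured neighbour, survives).

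One remark on scope: the displayed chain in the paper is garbled by stray commas, and the intended statement in \cite{CHHM15} is the linear chain $\gamma \le \gamma_r \le \gamma_I \le \gamma_{r2} \le \gamma_R \le 2\gamma$, which includes $\gamma_I(G) \le \gamma_{r2}(G)$. You have not addressed that inequality, instead proving the weaker pair $\gamma_r \le \gamma_{r2}$ and $\gamma_{r2} \le \gamma_R$ that your branching reading of the typo suggests. If you want to match the original source, you should add the short argument that the map $h(v)=|f(v)|$ sends a $2$-rainbow function to an Italian function of equal weight: a vertex with $f(v)=\emptyset$ either has a single neighbour carrying both colours (hence $h$-value $2$) or two distinct neighbours each carrying at least one colour (hence each with $h$-value at least $1$).
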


Moreover, Chellali et al.~\cite{CHHM15} demonstrated the following sharpness results for certain classes of graphs.  A graph is a \emph{cactus} if it is connected and any two of its cycles have at most one vertex in common.

\begin{theorem} \cite{CHHM15}
For every tree $T$, $\gamma_I(T) = \gamma_{r2}(T)$.
\end{theorem}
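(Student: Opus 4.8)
The inequality $\gamma_I(T) \le \gamma_{r2}(T)$ holds for every graph, so the plan is to establish the reverse inequality $\gamma_{r2}(T) \le \gamma_I(T)$ on trees. For the easy direction I would, given a $2$-rainbow dominating function $g$, set $f(v) = |g(v)|$; this preserves the weight, and any $v$ with $f(v) = 0$ has $g(v) = \emptyset$, hence a neighbour containing colour $1$ and a neighbour containing colour $2$. If these coincide, that single vertex has $f = 2$; if not, they are two distinct neighbours each with $f \ge 1$, and in either subcase the Italian condition (a weight-$2$ neighbour, or two weight-$1$ neighbours) is met. For the reverse direction I would start from a minimum Italian dominating function $f$ on $T$ and build a $2$-rainbow dominating function $g$ of the same weight by setting $g(v) = \emptyset$ when $f(v) = 0$, $g(v) = \{1,2\}$ when $f(v) = 2$, and giving each vertex with $f(v) = 1$ a single colour $c(v) \in \{1,2\}$ still to be chosen. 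Since $\sum_v |g(v)|$ then equals $\sum_v f(v)$, the entire problem reduces to choosing the colours $c(v)$ so that $g$ is genuinely $2$-rainbow dominating.

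The only vertices whose rainbow condition is in doubt are those $v$ with $f(v) = 0$ having no neighbour with $f = 2$; call these the \emph{critical} vertices. Because $f$ is Italian, each critical $v$ has at least two neighbours assigned $1$, and it is rainbow dominated precisely when at least one such neighbour receives colour $1$ and at least one receives colour $2$. Let $X$ be the set of critical vertices, let $Y$ be the set of vertices with $f = 1$, and let $B$ be the subgraph of $T$ consisting of all edges joining a vertex of $X$ to a vertex of $Y$. Then $B$ is bipartite with parts $X$ and $Y$, and, being a subgraph of the tree $T$, it is a forest. The task becomes the following colouring sublemma: in any forest $B$ bipartitioned into $X \cup Y$, one can $2$-colour $Y$ so that every $x \in X$ with $\deg_B(x) \ge 2$ has neighbours of both colours. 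I expect this sublemma to be the crux, and it is exactly here that the tree hypothesis is used: for a general graph the analogous constraints can form an odd cycle of mutually ``linked'' weight-$1$ vertices, which is unsatisfiable, and this is why only $\gamma_I \le \gamma_{r2}$ survives in general.

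I would prove the sublemma by induction on $|V(B)|$. If no vertex of $X$ has degree at least $2$ there is nothing to enforce and $Y$ may be coloured arbitrarily. Otherwise I would locate a leaf of $B$. If some $y \in Y$ is a leaf with neighbour $x \in X$, I delete $y$, colour the smaller forest by induction, and then colour $y$: if $x$ had degree at least $3$ it is already non-monochromatic among its remaining neighbours so $c(y)$ is free, while if $x$ had degree exactly $2$ with other neighbour $y'$ I set $c(y) \ne c(y')$; since $y$ is a leaf this affects no other constraint. If instead every leaf of $B$ lies in $X$, such a leaf has degree $1$ and carries no constraint, so I delete it and apply induction directly. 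This proves the sublemma, and feeding the resulting colours $c$ into the construction above makes $g$ a $2$-rainbow dominating function of weight $\gamma_I(T)$, giving $\gamma_{r2}(T) \le \gamma_I(T)$ and hence equality.
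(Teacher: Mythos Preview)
The paper does not prove this theorem; it is quoted without argument from Chellali, Haynes, Hedetniemi, and McRae as background in the introduction. There is therefore no in-paper proof against which to compare your proposal.

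Your argument is nonetheless correct. The direction $\gamma_I \le \gamma_{r2}$ via $f(v)=|g(v)|$ is the standard one, and your reduction of the reverse inequality on trees to a $2$-colouring of the $f$-weight-$1$ vertices, governed by the bipartite forest $B$, is sound; the leaf-stripping induction goes through as written. One cosmetic gap: when the deleted leaf $y\in Y$ has its unique neighbour $x\in X$ of degree exactly $1$ in $B$, you do not explicitly say what to do, but then $x$ carries no constraint and $c(y)$ may be chosen arbitrarily. In the intended application every critical vertex has $\deg_B(x)\ge 2$, so this case does not even arise.
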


\begin{theorem} \cite{CHHM15}
If $G$ is a cactus graph with no even cycle, then $\gamma_I(G) = \gamma_{r2}(G)$.
\end{theorem}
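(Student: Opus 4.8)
The plan is to prove the two inequalities $\gamma_I(G) \le \gamma_{r2}(G)$ and $\gamma_{r2}(G) \le \gamma_I(G)$ separately, the first holding for every graph and the second being where the hypothesis on $G$ enters. For the first, given any $2$-rainbow dominating function $f \colon V(G) \to \mathcal{P}(\{1,2\})$, I would define $g(v) = |f(v)|$; a routine check shows $g$ is an Italian dominating function of the same weight, since a vertex with $g(v)=0$ has $f(v)=\emptyset$ and hence a neighbour carrying colour $1$ and a neighbour carrying colour $2$, which are either a single common vertex of weight $2$ or two distinct vertices of weight $1$. This gives $\gamma_I(G) \le \gamma_{r2}(G)$ with no assumption on $G$.

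For the reverse inequality I would start from a minimum Italian dominating function $f$ and recolour it into a $2$-rainbow dominating function of no greater weight. The natural attempt keeps $h(v) = \{1,2\}$ wherever $f(v) = 2$ and $h(v) = \emptyset$ wherever $f(v) = 0$, and assigns to each vertex $v$ with $f(v) = 1$ a singleton colour $h(v) \in \{\{1\},\{2\}\}$ still to be decided. This preserves weight exactly, so the only thing to arrange is validity: every vertex $v$ with $f(v) = 0$ that has no neighbour of weight $2$ must, by the Italian condition, have at least two neighbours of weight $1$, and the colours chosen on those neighbours must include both $1$ and $2$.

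This reduces the whole problem to a two-colouring question. Let $V_1 = f^{-1}(1)$, and for each weight-$0$ vertex $v$ with no weight-$2$ neighbour let $E_v \subseteq V_1$ be its set of weight-$1$ neighbours, a set of size at least two. I would phrase the requirement as properly $2$-colouring the hypergraph on vertex set $V_1$ whose hyperedges are the sets $E_v$, where ``proper'' means no hyperedge is monochromatic. Hyperedges of size at least three impose only the weaker non-monochromatic constraint and, being easier to satisfy, are not the source of difficulty; the crux is the graph $H$ formed by the size-two hyperedges, in which a conflict edge $\{x_1, x_2\}$ forces $x_1, x_2$ to receive different colours, so that a valid colouring exists precisely when $H$ is bipartite, i.e.\ has no odd cycle. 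The heart of the argument is therefore to show that the hypothesis ``cactus with no even cycle'' forbids odd cycles in $H$. The key observation is that a conflict edge $\{x_1,x_2\}$ arises from a path $x_1\,v\,x_2$ in $G$ through a weight-$0$ vertex $v$, so an odd cycle in $H$ lifts to a closed walk in $G$ of length $\equiv 2 \pmod 4$ alternating between $V_1$ and weight-$0$ vertices; I would argue, using the block structure of a cactus (each block an edge or an odd cycle, sharing at most one vertex with any other block), that such a walk must traverse an even cycle of $G$, contradicting the hypothesis.

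I expect the main obstacle to be precisely this last step: a closed walk in a cactus need not be a simple cycle, so I cannot immediately read an even cycle of $G$ off an odd cycle of $H$. To handle this cleanly I would first put $f$ into a canonical form adapted to the cactus decomposition — for instance choosing, among minimum Italian dominating functions, one that minimises the number of weight-$2$ vertices, or that localises the ``doubly dependent'' weight-$0$ vertices within single blocks — and then induct on the number of blocks, colouring one leaf block at a time and verifying that the alternating parity constraints propagate consistently across cut vertices. Because the only cycles available are odd, the parity bookkeeping around each block closes up, which is the structural reason the equality holds, and the reason it would fail in the presence of an even cycle.
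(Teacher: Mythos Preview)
The paper does not supply a proof of this theorem: it is quoted verbatim as a result of Chellali, Haynes, Hedetniemi, and McRae and cited to \cite{CHHM15}, with no argument reproduced. There is therefore nothing in the present paper against which to compare your proposal.

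On its own terms, your outline follows the expected route --- the inequality $\gamma_I \le \gamma_{r2}$ is the general one, and the reverse is obtained by recolouring a minimum Italian dominating function --- and the reduction to a $2$-colouring problem on $V_1 = f^{-1}(1)$ is correctly identified. One step is overstated: you write that hyperedges $E_v$ of size at least three ``are not the source of difficulty,'' but a bipartition of the size-two conflict graph $H$ that makes every size-two $E_v$ bichromatic can still leave a larger $E_v$ monochromatic, so these edges cannot simply be discarded from the analysis. Your later plan to pass to a canonical $f$ and induct along the block tree of the cactus is where this would actually get resolved, since in a cactus whose blocks are edges or odd cycles the constraints within a block and across a cut vertex can be handled locally; but the proposal as written treats this as an afterthought rather than the place where the real work is done. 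You also correctly flag, but do not settle, the issue that an odd cycle in $H$ lifts only to a closed walk of even length in $G$, which in a cactus need not produce an even cycle of $G$ directly.
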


Finally, Chellali et al.~\cite{CHHM15} determined that the decision problem for Italian domination is NP-complete, even when restricted to bipartite graphs.

Klostermeyer and MacGillivray~\cite{KM19} demonstrated the following lower bound on the Italian domination number of trees.

\begin{theorem}
If $T$ is a tree with at least two vertices, then $\gamma_I(T) \ge \gamma(T) + 1$.
\end{theorem}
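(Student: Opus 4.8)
The plan is to analyze an Italian dominating function of minimum weight through its level sets. Let $f$ be an Italian dominating function on $T$ with weight $\gamma_I(T)$, and set $V_i = \{v : f(v) = i\}$ for $i \in \{0,1,2\}$, so that $\gamma_I(T) = |V_1| + 2|V_2|$. The first observation I would record is that $V_1 \cup V_2$ is a dominating set: any vertex $v \in V_0$ has, by definition, a neighbour in $V_2$ or two neighbours in $V_1$, and in either case a neighbour in $V_1 \cup V_2$. Hence $\gamma(T) \le |V_1| + |V_2|$.

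From here I would split into two cases according to whether $V_2$ is empty. If $|V_2| \ge 1$, then $\gamma_I(T) = |V_1| + 2|V_2| = (|V_1| + |V_2|) + |V_2| \ge \gamma(T) + 1$, and we are done immediately. The substantive case is $V_2 = \emptyset$. Here $f$ assigns only the values $0$ and $1$, so the Italian condition says precisely that every vertex outside $V_1$ has two neighbours in $V_1$; that is, $V_1$ is a $2$-dominating set, and $\gamma_I(T) = |V_1| \ge \gamma_2(T)$. Thus the whole theorem reduces to the purely combinatorial claim that $\gamma_2(T) \ge \gamma(T) + 1$ for every tree $T$ on $n \ge 2$ vertices.

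This reduction is where I expect the real work to lie, and it is the step that must genuinely use acyclicity (for a general graph the inequality can fail, e.g.\ $\gamma_2(C_4) = \gamma(C_4) = 2$). To prove it I would bound the two parameters from opposite sides in terms of $n$. For the lower bound on $\gamma_2$, let $S$ be a minimum $2$-dominating set; every vertex of $V(T) \setminus S$ contributes at least two edges to the cut between $S$ and its complement, and since each cut edge has exactly one endpoint outside $S$ these contributions are counted once each, so the cut has at least $2(n - \gamma_2(T))$ edges. As $T$ has exactly $n - 1$ edges in total, this forces $n - 1 \ge 2(n - \gamma_2(T))$, i.e.\ $\gamma_2(T) \ge (n+1)/2$. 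For the upper bound on $\gamma$, I would invoke the classical fact (Ore) that a graph with no isolated vertices satisfies $\gamma \le n/2$, which applies since a tree on at least two vertices is connected. Combining the two bounds gives $\gamma(T) \le n/2 < (n+1)/2 \le \gamma_2(T)$, and integrality upgrades the strict inequality to $\gamma_2(T) \ge \gamma(T) + 1$, completing the argument. The main obstacle is recognizing the $V_2 = \emptyset$ case as a $2$-domination problem and then finding the edge-counting bound that separates $\gamma_2$ from $\gamma$ precisely through the tree's edge count $n-1$.
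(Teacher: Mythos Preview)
The paper does not actually prove this theorem; it is quoted from Klostermeyer and MacGillivray~\cite{KM19} as background and stated without argument. So there is no ``paper's proof'' to compare against.

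That said, your proof is correct. The split on whether $V_2$ is empty is the natural move, and the case $|V_2|\ge 1$ is immediate. In the remaining case your identification of $V_1$ as a $2$-dominating set is right, and the edge-counting bound $\gamma_2(T)\ge (n+1)/2$ together with Ore's bound $\gamma(T)\le n/2$ does exactly what is needed: for even $n$ integrality pushes $\gamma_2(T)$ up to $n/2+1$, and for odd $n$ integrality pushes $\gamma(T)$ down to $(n-1)/2$, so in both parities $\gamma_2(T)-\gamma(T)\ge 1$. Your remark that acyclicity is genuinely used (via the edge count $n-1$) and that the inequality fails for $C_4$ is a good sanity check.
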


Henning and Kloystermeyer~\cite{HK17} characterized the trees for which $\gamma_I(T) \ge \gamma(T) + 1$ and for which $\gamma_I(T) = 2 \gamma(T)$.

The Italian domination number has been studied for Cartesian products of cycles by multiple authors.  Li et al.~\cite{LSX18} determined the following results based on the weak \{2\}-domination number.

\begin{theorem} \cite{LSX18}
For $n \ge 3$, 
\[
\gamma_I(C_n \Box C_3) = \begin{cases}
n, & n \equiv 0 \pmod{3}; \\
n + 1, & n \not\equiv 0 \pmod{3}.
\end{cases}
\]
\end{theorem}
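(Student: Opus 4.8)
The plan is to establish the common lower bound $\gamma_I(C_n \Box C_3) \ge n$ for all $n$ by a counting argument, match it by an explicit construction when $3 \mid n$, and then handle the residual gap for $3 \nmid n$ by a rigidity analysis that rules out weight exactly $n$. Throughout I identify $V(C_n \Box C_3)$ with $\mathbb{Z}_n \times \mathbb{Z}_3$, where $(i,j)$ is adjacent to $(i\pm 1, j)$ and to the two other vertices $(i, j')$, $j' \neq j$, of its own column (recall $C_3$ is a triangle, so the three vertices of a column are mutually adjacent).

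\textbf{Universal lower bound.} Given an Italian dominating function $f$, let $a$, $b$, $z$ count the vertices of value $2$, $1$, $0$, so $a + b + z = 3n$ and $w(f) = 2a + b$. Each $0$-vertex either has a $2$-neighbour or two $1$-neighbours; since each $2$ has four neighbours, at most $4a$ zeros are dominated the first way, while each remaining zero consumes two of the $4b$ edges leaving the $1$-vertices, so at most $2b$ zeros are dominated the second way. Hence $z \le 4a + 2b$, and combining with $a + b + z = 3n$ gives $3n \le 5a + 3b \le 6a + 3b = 3\,w(f)$, i.e.\ $w(f) \ge n$.

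\textbf{Construction for $3 \mid n$.} I place a $1$ on each diagonal vertex $(i, i \bmod 3)$ and $0$ elsewhere, for total weight $n$. A direct check shows every $0$-vertex $(i,j)$ (so $j \not\equiv i \pmod 3$) has exactly two $1$-neighbours: one reached by moving within the column to $(i, i\bmod 3)$, and one reached by the column step to $i-1$ or $i+1$ in the appropriate direction according to whether $j \equiv i-1$ or $j \equiv i+1 \pmod 3$. Because $3 \mid n$, the diagonal closes up consistently around $\mathbb{Z}_n$, so the function is Italian dominating and $\gamma_I = n$.

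\textbf{The gap for $3 \nmid n$ (the crux).} Here I claim weight exactly $n$ is impossible. Tracing the equalities in the universal bound, $w(f) = n$ forces $a = 0$ and $z = 2b = 2n$, and tightness forces the $1$-set $S$ to be independent with every vertex outside $S$ having exactly two neighbours in $S$. Since each column is a triangle, independence allows at most one vertex of $S$ per column, and $|S| = n$ then forces exactly one per column; write $S = \{(i, r_i) : i \in \mathbb{Z}_n\}$. Independence between consecutive columns gives $r_i \neq r_{i+1}$, and analyzing the exact-two-neighbour condition at a non-$S$ vertex $(i,j)$ — which already has precisely one $S$-neighbour inside its column — shows it is equivalent to $r_{i-1} \neq r_{i+1}$. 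Thus every three consecutive terms of the cyclic sequence $(r_i)$ are pairwise distinct, so $r_{i+2}$ is determined by $(r_i, r_{i+1})$; this recurrence makes $(r_i)$ purely periodic of period $3$, and such a sequence closes up around $\mathbb{Z}_n$ only when $3 \mid n$. Hence $3 \nmid n$ rules out weight $n$, giving $\gamma_I \ge n + 1$. For the matching upper bound, I lay down the diagonal $1$'s for $i = 0, \dots, n-1$; this is Italian dominating except at the wrap-around seam, where at most two vertices fail, and adding a single extra $1$ adjacent to them repairs the defect at total weight $n + 1$. I expect the main obstacle to be this third step: converting the weight-$n$ hypothesis into the rigid structure (an independent transversal with the exact-two-neighbour property) and then extracting the ternary no-three-consecutive-repeat sequence whose forced periodicity pins down $3 \mid n$; the constructions and the first counting bound should be routine by comparison.
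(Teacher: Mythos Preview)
The paper does not supply its own proof of this statement; it is quoted from \cite{LSX18} as a known result on the undirected product $C_n\Box C_3$, so there is nothing in the paper to compare your argument against. That said, your proposal is a correct self-contained proof. The counting bound $z\le 4a+2b$ yielding $w(f)\ge n$ is valid, the diagonal construction for $3\mid n$ checks out, and the rigidity analysis for $3\nmid n$ is exactly right: equality forces $a=0$, $|S|=n$, $S$ an independent transversal of the columns with every $0$-vertex having exactly two $S$-neighbours, whence $r_{i-1},r_i,r_{i+1}$ are pairwise distinct and the sequence $(r_i)$ has period $3$, contradicting $3\nmid n$.

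The only spot that would benefit from one more line is the upper-bound construction for $3\nmid n$. When $n\equiv 1\pmod 3$ the two defective $0$-vertices at the seam are $(0,2)$ and $(n-1,1)$, which share the common neighbour $(0,1)$ (or $(n-1,2)$), so your recipe of ``adding a single extra $1$ adjacent to them'' works literally. When $n\equiv 2\pmod 3$, however, the two defective vertices are $(0,2)$ and $(n-1,2)$; these are adjacent to each other but have no third common neighbour, so the extra $1$ must be placed \emph{at} one of them---removing that vertex from the $0$-set and simultaneously supplying the missing $1$-neighbour for the other. With that small clarification the construction gives weight $n+1$ in both residue classes and the proof is complete.
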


\begin{theorem} \cite{LSX18}
For $n \ge 4$,
\[
\gamma(C_n \Box C_4) = \begin{cases}
\left \lceil \frac{3n}{2} \right \rceil, & n \equiv 0, 1, 3, 4, 5 \pmod{8}; \\
\left \lceil \frac{3n}{2} \right \rceil + 1, & n \equiv 2, 6, 7 \pmod{8}.
\end{cases}
\]
\end{theorem}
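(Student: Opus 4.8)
The plan is to prove matching lower and upper bounds on the Italian domination number $\gamma_I(C_n \Box C_4)$. Throughout I write $f$ for an Italian dominating function, $W = \sum_v f(v)$ for its weight, and, indexing the copies of $C_4$ by $i \in \mathbb{Z}_n$, I let $w_i$ denote the total weight $f$ assigns to the four vertices of \emph{column} $i$. Since $W$ is an integer, the base estimate to establish is $W \ge \lceil 3n/2 \rceil$, and the residue-dependent $+1$ is then handled on top of this.

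For the lower bound I would use a discharging estimate on the column weights, driven by the reformulation that every vertex $v$ with $f(v)=0$ satisfies $\sum_{u \sim v} f(u) \ge 2$. Summing this condition over the cells of a single column yields three local facts: an empty column ($w_i = 0$) forces $w_{i-1}+w_{i+1} \ge 8$ (each of its four cells must draw its full demand horizontally); a weight-one column forces $w_{i-1}+w_{i+1} \ge 4$; and $w_i \ge 3$ or $w_i \in \{0,1\}$ each already give $g_i := 2w_i + w_{i-1} + w_{i+1} \ge 6$. Thus the only columns with $g_i < 6$ have $w_i = 2$ and $w_{i-1}+w_{i+1} \le 1$, and every such column is adjacent to an empty column. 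The empty-column inequality shows that an empty column has at most one weight-two neighbour (its two neighbours sum to at least $8$) and hence carries a surplus $w_{i-1}+w_{i+1}-6 \ge 2$ that absorbs that neighbour's deficit (at most $2$). Charging deficits to adjacent empty columns gives $\sum_i g_i \ge 6n$, and since $\sum_i g_i = 4W$ by telescoping the cyclic sums, we obtain $W \ge 3n/2$, hence $W \ge \lceil 3n/2\rceil$ for all $n$.

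For the upper bound I would exhibit explicit periodic functions. The basic tile has period $4$ in the cyclic direction, with successive column patterns (listing the four rows) $(2,0,0,0)$, $(0,0,1,0)$, $(0,1,0,1)$, $(0,0,1,0)$, of weight $6$; a direct check of all four column types verifies that this pattern Italian-dominates when repeated, so for $n \equiv 0 \pmod 4$ (that is, $n \equiv 0,4 \pmod 8$) it wraps consistently and realizes weight $3n/2 = \lceil 3n/2\rceil$. For the remaining residues I would splice a short corrective block of one, two, or three columns between copies of the tile. These blocks must be designed with care, since the periodic pattern draws horizontal support across the splice point that must be restored locally; a finite verification of the junctions then shows that suitable blocks realize weight $\lceil 3n/2\rceil$ for $n \equiv 1,3,5 \pmod 8$ and weight $\lceil 3n/2\rceil + 1$ for $n \equiv 2,6,7 \pmod 8$, matching the claimed values.

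The main obstacle is the matching lower bound in the three exceptional classes, i.e.\ proving $W \ge \lceil 3n/2\rceil + 1$ when $n \equiv 2,6,7 \pmod 8$. This forces an analysis of the equality case of the discharging argument: demanding $\sum_i g_i = 6n$ tightly constrains the admissible column-weight sequences, and combining this with the row placements compatible with the Italian condition identifies each weight-$\lceil 3n/2\rceil$ function with a closed walk of length $n$ in a finite transfer graph of admissible column-to-column transitions. I expect the crux to be a period/parity obstruction: the minimum-weight cycles in this graph have lengths that constrain $n$ modulo $8$, so for $n \equiv 2,6,7$ no closed walk of length $n$ can stay at the minimum rate and an extra unit of weight is unavoidable. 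Extracting this cleanly — ideally by isolating a small invariant of admissible sequences rather than carrying out a full transfer-matrix computation — is the delicate step.
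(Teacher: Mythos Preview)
The paper does not prove this theorem; it is quoted from Li, Shao, and Xu~\cite{LSX18} as background for the introduction (note also the typo $\gamma$ for $\gamma_I$ in the displayed statement). There is consequently no argument in the paper to compare your proposal against.

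Taken on its own merits, your discharging argument for the base lower bound $W \ge \lceil 3n/2 \rceil$ is correct: the column inequalities you state (empty column forces $w_{i-1}+w_{i+1}\ge 8$, weight-one column forces $w_{i-1}+w_{i+1}\ge 4$) do hold, and the bookkeeping that sends each deficient weight-two column's shortfall to an adjacent empty column is consistent because an empty column can have at most one weight-two neighbour. The period-$4$ construction is also a reasonable template for the upper bound, though the splicing for the non-multiples of $4$ is asserted rather than exhibited.

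The genuine gap is exactly where you flag it: the extra $+1$ in the lower bound for $n \equiv 2,6,7 \pmod 8$. You describe a transfer-graph strategy and say you ``expect'' a period/parity obstruction modulo $8$, but nothing in the proposal pins down what that invariant is or why those three residue classes are singled out. Since this refinement is the entire content of the theorem beyond the coarse $\lceil 3n/2\rceil$ bound, the proposal as it stands is a plan rather than a proof.
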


St\c{e}pie\'{n} et al.~\cite{SSSZ14} obtained the following based on the 2-rainbow domination number.

\begin{theorem} \cite{SSSZ14}
For $n \ge 5$, $\gamma_I(C_n \Box C_5) = 2n$.
\end{theorem}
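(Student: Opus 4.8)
The plan is to work on the torus $C_n \Box C_5$ whose vertices are pairs $(i,j)$ with row $i \in \mathbb{Z}_5$ and column $j \in \mathbb{Z}_n$, so that each column induces a copy of $C_5$ and $(i,j)$ is adjacent to $(i\pm 1,j)$ and $(i,j\pm 1)$; write $a_i^j = f(i,j)$ and let $w_j = \sum_{i \in \mathbb{Z}_5} a_i^j$ be the weight of column $j$. I first record the reformulation that $f$ is Italian dominating if and only if $\sum_{u \in N(v)} f(u) \ge 2$ whenever $f(v) = 0$; since values lie in $\{0,1,2\}$, a single neighbour of value $2$ or two neighbours of value $1$ are exactly the ways a neighbour-sum reaches $2$. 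The goal is then to show $\sum_j w_j = 2n$, and I would prove the two bounds separately.

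For the upper bound I would first treat $n \equiv 0 \pmod 5$ with the explicit diagonal placing a single $2$ at each $(2j \bmod 5,\, j)$ and $0$ elsewhere. This has weight exactly $2n$, and in column $j$ the dominated rows are $\{2j-1,2j,2j+1\}$ from its own $2$, together with row $2j-2$ (from the $2$ in column $j-1$) and row $2j+2$ (from the $2$ in column $j+1$), i.e.\ five consecutive residues, hence all of $\mathbb{Z}_5$. For $n \bmod 5 \in \{1,2,3,4\}$ I would either adapt the diagonal across a short ``defect'' block of columns while keeping the weight at $2$ per column, or—more cleanly—invoke the $2$-rainbow dominating function of weight $2n$ constructed by St\c{e}pie\'{n} et al.\ and convert it: given a $2$-rainbow dominating function $g$, the function $f(v) = |g(v)|$ has the same weight and is Italian, because a vertex with $f(v)=0$ has $g(v)=\emptyset$ and hence a neighbour carrying colour $1$ and a neighbour carrying colour $2$, which are either two distinct neighbours of positive value or one neighbour with $g(w)=\{1,2\}$ and $f(w)=2$. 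Either way $\gamma_I(C_n \Box C_5) \le 2n$.

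The lower bound is the crux. The naive global count is not strong enough: summing the inequality $a_{i-1}^j + a_{i+1}^j + a_i^{j-1} + a_i^{j+1} + 2a_i^j \ge 2$, valid at every vertex, over all $5n$ vertices of the $4$-regular torus only yields $6W \ge 10n$, i.e.\ $W \ge \tfrac{5n}{3}$. This is exactly the fractional (linear-programming) bound, and the gap up to $2n$ reflects the odd-cycle $C_5$ integrality obstruction that no uniform dual weighting can detect, so a structural, column-by-column argument is needed. The key local facts are that light columns force heavy neighbours: if $w_j = 0$ then every row $i$ needs $a_i^{j-1}+a_i^{j+1} \ge 2$, and summing over $i$ gives $w_{j-1}+w_{j+1} \ge 10$; if $w_j = 1$, a short case check on the position of the single label shows the per-row external demands sum to $6$, whence $w_{j-1}+w_{j+1} \ge 6$. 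Thus any column of weight at most $1$ creates surplus in its two neighbours far exceeding its own deficit.

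The remaining work is to turn these local inequalities into the global statement $\sum_j (w_j - 2) \ge 0$ by a discharging argument: each column of weight $<2$ is assigned a deficit $2 - w_j$ which it collects from the guaranteed surplus of its neighbours, and the bookkeeping must ensure that a single heavy column is not over-claimed by two light columns flanking it. I expect this amortisation—equivalently, proving that the minimum mean column weight over any closed sequence of admissible column-to-column transitions equals $2$—to be the principal obstacle, since it is precisely where the $C_5$ parity is used. The cleanest rigorous route is a transfer-matrix formulation whose states record a column's label vector together with the rows it still owes to the next column, for which one shows the minimum-mean-cycle value is $2$. Finally the wrap-around seam and the small cases near $n = 5$ are checked directly, completing $\gamma_I(C_n \Box C_5) = 2n$.
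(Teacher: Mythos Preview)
The paper does not give its own proof of this statement: it appears in the introductory survey of known results and is attributed to St\c{e}pie\'{n} et al.\ \cite{SSSZ14}, whose paper establishes $\gamma_{r2}(C_n \Box C_5) = 2n$ for the $2$-rainbow domination number. The present paper's own arguments (Section~2) concern \emph{directed} cycles exclusively, so there is no in-paper proof to compare against beyond that citation.

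On the substance of your proposal: the upper bound is fine, and the $2$-rainbow-to-Italian conversion you describe is exactly the general inequality $\gamma_I \le \gamma_{r2}$ that turns the cited result into the upper bound $\gamma_I(C_n \Box C_5) \le 2n$. The genuine gap is the lower bound. You correctly note that the LP relaxation gives only $5n/3$, and your local inequalities ($w_{j-1}+w_{j+1} \ge 10$ when $w_j = 0$, and $\ge 6$ when $w_j = 1$) are valid. But you never carry out the discharging, and the obstacle you flag is real: a heavy column can be flanked by two light columns and over-claimed, and ruling this out requires a case analysis of admissible adjacent column \emph{configurations}, not just their weights. Announcing that a transfer-matrix computation ``would'' show the minimum mean cycle weight is $2$ is not a proof; the state space (column label vector together with the rows still owed) and its transitions are the actual content, and nothing in your write-up specifies them or provides a certificate for the minimum mean cycle. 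As written, the lower bound is a plan, not a proof.
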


Gao et al.~\cite{GWLY20} determined the following general bounds for Cartesian products of cycles.

\begin{theorem} \cite{GWLY20}
For $m \equiv n \equiv 0 \pmod{3}$, $\gamma_I(C_n \Box C_m) = \frac{mn}{3}$.
\end{theorem}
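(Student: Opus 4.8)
The plan is to prove the two inequalities $\gamma_I(C_n \Box C_m) \ge \tfrac{mn}{3}$ and $\gamma_I(C_n \Box C_m) \le \tfrac{mn}{3}$ separately. I denote the vertices of the torus by pairs $(i,j)$ with $i \in \mathbb{Z}_n$ and $j \in \mathbb{Z}_m$, where $(i,j)$ is adjacent to $(i\pm 1, j)$ and $(i, j\pm 1)$, so the graph is $4$-regular on $mn$ vertices. The first step is a convenient reformulation: since each $0$-valued vertex has exactly four neighbours and the Italian condition asks for either one neighbour of value $2$ or two of value $1$, a function $f : V \to \{0,1,2\}$ is an Italian dominating function if and only if $\sum_{w \in N(v)} f(w) \ge 2$ for every $v$ with $f(v) = 0$.

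For the lower bound I would run a double-counting argument on the quantity $\sum_{v \in V_0} \sum_{w \in N(v)} f(w)$, where $V_i = \{v : f(v) = i\}$. On one hand the reformulated condition gives that this sum is at least $2|V_0|$. On the other hand, interchanging the order of summation rewrites it as $\sum_w f(w)\,|N(w) \cap V_0|$, and $4$-regularity bounds $|N(w)\cap V_0| \le 4$, so the sum is at most $4W$, where $W = |V_1| + 2|V_2|$ is the weight. Combining the two estimates yields $2W \ge |V_0| = mn - |V_1| - |V_2|$, and since $|V_1| + |V_2| \le |V_1| + 2|V_2| = W$ this rearranges to $3W \ge mn$, that is, $W \ge \tfrac{mn}{3}$. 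I expect this to be the cleanest part of the argument; it is really the generic $r$-regular bound $\gamma_I \ge \tfrac{2|V|}{r+2}$ specialised to $r = 4$.

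For the matching upper bound I would exhibit the diagonal weighting $f(i,j) = 1$ precisely when $i + j \equiv 0 \pmod 3$ and $f(i,j) = 0$ otherwise. Its weight is exactly $\tfrac{mn}{3}$, and because $3 \mid n$ and $3 \mid m$ the residue $(i+j) \bmod 3$ remains well defined as $i$ and $j$ wrap around their cycles, so $f$ is a genuine function on the torus. To verify the Italian condition I would check the two residue classes of $0$-vertices: a vertex with $i + j \equiv 1$ has its $1$-neighbours at $(i-1,j)$ and $(i,j-1)$, while a vertex with $i+j \equiv 2$ has them at $(i+1,j)$ and $(i,j+1)$; in each case exactly two of the four neighbours carry value $1$. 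This simultaneously shows the construction is tight, since every $1$-vertex then has all four of its neighbours in $V_0$, making the inequalities of the counting argument equalities.

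The main obstacle is less in either inequality individually than in isolating the correct construction: many natural weightings of density $1/3$ (for instance constant stripes $i \equiv 0 \pmod 3$) fail, because a $0$-vertex then sees only a single $1$-neighbour and the Italian requirement of two $1$'s is violated. The diagonal pattern is exactly the one that spreads the $1$'s so that each deficient vertex is covered twice, and recognising that the divisibility hypotheses are precisely what make this pattern consistent on the torus is the key insight tying the two bounds together.
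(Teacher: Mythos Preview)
Your proof is correct. Both inequalities are handled cleanly: the double-counting lower bound is the standard $\gamma_I(G) \ge \lceil 2|V|/(r+2) \rceil$ for $r$-regular graphs specialised to $r=4$, and the diagonal construction is the right one, with the divisibility hypotheses ensuring it descends to the torus.

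However, the paper does not contain its own proof of this statement. This theorem appears only in the introduction as a cited result from Gao, Wang, Liu, and Yang~\cite{GWLY20}, concerning \emph{undirected} cycles; the paper's own contributions (Section~2) are about directed cycles, where the bound is $mn/2$ rather than $mn/3$. So there is nothing in the present paper to compare your argument against. Your approach is in fact the natural one and is essentially what the cited reference does; the lower bound you give is also the undirected analogue of Volkmann's Proposition (the bound $\gamma_I(D) \ge \lceil 2n/(2+\Delta^+) \rceil$) quoted later in the same introduction.
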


\begin{theorem} \cite{GWLY20}
For $m \not\equiv 0 \pmod{3}$ or $n \not\equiv 0 \pmod{3}$,
\[
\left \lceil \frac{nm}{3} \right \rceil \le \gamma_I(C_n \Box C_m) \le \left \lfloor \frac{2mn + n + 2m + 1}{6} \right \rfloor.
\]
\end{theorem}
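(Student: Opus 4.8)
The plan is to prove the two bounds separately: the lower bound by a short counting argument and the upper bound by an explicit construction.

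For the lower bound I would exploit that $C_n \Box C_m$ is $4$-regular. Let $f$ be any Italian dominating function of weight $w = \sum_v f(v)$, let $z$ be the number of vertices with $f(v) = 0$, and recall that Italian domination forces $\sum_{u \sim v} f(u) \ge 2$ for every such vertex. Double counting the quantity $\sum_{v : f(v) = 0} \sum_{u \sim v} f(u)$ gives, on one hand, the lower bound $2z$, and on the other hand, after switching the order of summation and using $\deg u = 4$, the upper bound $4w$. Since every vertex of positive weight contributes at least $1$ to $w$, we also have $z \ge nm - w$. Combining $2(nm - w) \le 2z \le 4w$ yields $w \ge nm/3$, and as $w$ is an integer this gives $\gamma_I(C_n \Box C_m) \ge \lceil nm/3 \rceil$.

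For the upper bound, identify $V(C_n \Box C_m)$ with $\mathbb{Z}_n \times \mathbb{Z}_m$ and start from the diagonal pattern assigning $f(i,j) = 1$ exactly when $i + j \equiv 0 \pmod 3$. A direct check shows that each zero vertex has precisely two neighbours of weight $1$, so this is an Italian dominating function of weight $nm/3$ whenever the pattern closes up consistently around both cycles, that is, when $3 \mid n$ and $3 \mid m$; this recovers the exact value of the preceding theorem. When $3 \nmid n$ or $3 \nmid m$ the diagonal lines fail to meet at the seams where the cycles wrap, leaving a bounded number of vertices undominated. The strategy is to retain the diagonal pattern on the bulk of the torus and repair each seam locally, either by promoting some weight-$1$ vertices to weight $2$ or by inserting a short corrective strip of extra weight-$1$ vertices along the offending row(s) and column(s).

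To hit the stated bound it is cleanest to rewrite the target as $\lfloor (n+1)(2m+1)/6 \rfloor$, using the factorisation $2mn + n + 2m + 1 = (n+1)(2m+1)$; this signals that the repair cost beyond the main term $nm/3$ is essentially linear, about $\tfrac{n}{6} + \tfrac{m}{3}$. I would then run a case analysis on the residues of $n$ and $m$ modulo $3$, in each case writing the patched function explicitly, verifying the Italian condition at and near the seams, and summing the weight. The hard part will be this final accounting: the corrections must be made as economically as possible so that the total weight stays beneath the floor in every residue case, which requires choosing the seam repairs — and in particular how the two seams interact at the corner where both wraps occur — with care rather than using a crude generic patch. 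Confirming the Italian condition for the few special rows and columns introduced by the repair, across all residue combinations, is the most technical and error-prone portion of the argument.
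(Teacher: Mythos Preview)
The paper does not actually prove this theorem: it is quoted in the introduction as prior work of Gao, Wang, Liu, and Yang~\cite{GWLY20} on \emph{undirected} cycle products, and no argument for it appears in the present paper. There is therefore nothing in the paper to compare your proposal against.

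On the substance of your proposal: your lower-bound argument is correct and is the standard one. The inequality $z \ge nm - w$ follows because the number of vertices of positive weight is at most $w$, and the double count $2z \le 4w$ is exactly the $4$-regularity bound; together they give $w \ge nm/3$ and hence the ceiling.

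Your upper-bound discussion, however, is only a plan, not a proof. The diagonal pattern $f(i,j)=1$ iff $i+j\equiv 0 \pmod 3$ and the factorisation $(n+1)(2m+1)$ are good starting points, and this is indeed the flavour of the construction in \cite{GWLY20}, but everything hinges on the ``seam repair'' that you defer to an unspecified case analysis. As you yourself note, getting the repairs economical enough to stay under $\lfloor (n+1)(2m+1)/6 \rfloor$ in every residue class is the entire content of the upper bound, and nothing in your outline guarantees that a crude patch will not overshoot by a constant in some cases. Until you write down the explicit corrected functions for each pair of residues $(n \bmod 3,\, m \bmod 3)$, verify the Italian condition along the seams (including the corner where both wraps interact), and total the weights, the upper bound remains unproved.
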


Volkmann~\cite{V20} initated the study of the Italian domination number in digraphs.  For a digraph $D$, we require that for every $v \in V(D)$ with $f(v) = 0$, it has an inneighbour $w$ with $f(w) = 2$ or two inneighbours $x_1, x_2$ such that $f(x_1) = f(x_2) = 1$.  We let the maximum out-degree of a digraph be denoted by $\Delta^+(D)$ and the maximum in-degree of a digraph be denoted by $\Delta^-(D)$.  Volkmann~\cite{V20} provided the following preliminary results.

\begin{prop} \cite{V20}
Let $D$ be a digraph of order $n$.  Then $\gamma_I(D) \ge \left \lceil \frac{2n}{2 + \Delta^+(D)} \right \rceil$.
\end{prop}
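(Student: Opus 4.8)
The plan is to reduce the Italian domination condition to a single weight inequality and then to double count the total weight that the nonzero vertices can deliver to the zero vertices. First I would observe that, for a digraph, the defining condition is equivalent to a clean arithmetic statement: a vertex $v$ with $f(v) = 0$ is Italian dominated precisely when $\sum_{u \in N^-(v)} f(u) \ge 2$. Indeed, if this sum is at least $2$ then either some inneighbour carries label $2$, or every nonzero inneighbour carries label $1$ and there must be at least two of them; conversely, both defining configurations give incoming weight at least $2$. This reformulation is the conceptual heart of the argument, and identifying it is the only step requiring any care.

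Next I would fix an Italian dominating function $f$ of minimum weight and partition $V(D)$ into the sets $V_i = \{v : f(v) = i\}$ for $i \in \{0,1,2\}$, writing $n_i = |V_i|$, so that $\gamma_I(D) = n_1 + 2n_2$ and $n = n_0 + n_1 + n_2$. Summing the reformulated condition over all $v \in V_0$ gives the lower bound
\[
\sum_{v \in V_0} \sum_{u \in N^-(v)} f(u) \ge 2 n_0.
\]
I would then exchange the order of summation: the left-hand side equals $\sum_{u \in V(D)} f(u)\,\bigl|N^+(u) \cap V_0\bigr|$, since $u$ contributes to the inner sum exactly for those $v \in V_0$ with $v \in N^+(u)$. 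Bounding $\bigl|N^+(u) \cap V_0\bigr| \le d^+(u) \le \Delta^+(D)$ yields the upper bound
\[
\sum_{v \in V_0} \sum_{u \in N^-(v)} f(u) \le \Delta^+(D) \sum_{u \in V(D)} f(u) = \Delta^+(D)\, \gamma_I(D).
\]

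Finally I would combine the two bounds with the elementary inequality $n_0 = n - n_1 - n_2 \ge n - (n_1 + 2n_2) = n - \gamma_I(D)$. This gives $2\bigl(n - \gamma_I(D)\bigr) \le 2 n_0 \le \Delta^+(D)\,\gamma_I(D)$, which rearranges to $2n \le (2 + \Delta^+(D))\,\gamma_I(D)$ and hence $\gamma_I(D) \ge \tfrac{2n}{2 + \Delta^+(D)}$. Since $\gamma_I(D)$ is an integer, the ceiling may be inserted for free. I expect no genuine obstacle beyond the initial reformulation; the remainder is a routine double-counting argument, and the only points to check carefully are that the weight flows along arcs from $u$ to its outneighbours (so that it is the out-degree $\Delta^+(D)$, not the in-degree, that controls the bound) and that discarding the contribution of $n_2$ in $n_1 + 2n_2$ goes in the favourable direction.
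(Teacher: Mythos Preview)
Your proof is correct. The reformulation of the Italian domination condition as $\sum_{u \in N^-(v)} f(u) \ge 2$ for each $v \in V_0$ is valid, the double-counting swap is carried out properly (and you are right that it is $\Delta^+$ rather than $\Delta^-$ that appears), and the final inequality chain $2(n - \gamma_I(D)) \le 2n_0 \le \Delta^+(D)\,\gamma_I(D)$ rearranges exactly as claimed.

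There is nothing to compare against here: the paper merely cites this proposition from Volkmann~\cite{V20} and does not supply a proof of its own. Your argument is the standard one for bounds of this type and is almost certainly what Volkmann had in mind as well.
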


\begin{prop} \cite{V20}
Let $D$ be a digraph of order $n$.  Then $\gamma_I(D) \le n$, and $\gamma_I(D) = n$ if and only if $\Delta^+(D), \Delta^-(D) \le 1$.
\end{prop}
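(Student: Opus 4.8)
The plan is to establish the bound $\gamma_I(D) \le n$ first, and then prove the characterization by verifying each implication separately. For the upper bound I would simply take the constant function $f \equiv 1$; then no vertex is assigned $0$, so the Italian domination condition is vacuously satisfied, and the weight is exactly $n$. This gives $\gamma_I(D) \le n$ for every digraph $D$, and reduces the characterization to determining when this bound is tight.

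For the forward implication, suppose $\Delta^+(D), \Delta^-(D) \le 1$ and let $f$ be an arbitrary Italian dominating function. Partition $V(D)$ into $V_0, V_1, V_2$ according to the value of $f$, so that the weight of $f$ is $|V_1| + 2|V_2|$ while $n = |V_0| + |V_1| + |V_2|$; hence it suffices to prove $|V_2| \ge |V_0|$. The key observation is that because every vertex has in-degree at most $1$, no vertex can have two distinct in-neighbours, so the ``two in-neighbours assigned $1$'' alternative is unavailable. Consequently each $v \in V_0$ must possess a necessarily unique in-neighbour $\phi(v) \in V_2$. This defines a map $\phi : V_0 \to V_2$, and since $\Delta^+(D) \le 1$ forbids any vertex from having two distinct out-neighbours, $\phi$ is injective. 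Therefore $|V_0| \le |V_2|$, whence the weight of $f$ is at least $n$; combined with the upper bound this yields $\gamma_I(D) = n$.

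For the reverse implication I would argue the contrapositive, constructing an Italian dominating function of weight $n - 1$ whenever a degree exceeds $1$. If $\Delta^+(D) \ge 2$, choose a vertex $u$ with two distinct out-neighbours $v_1, v_2$ and set $f(u) = 2$, $f(v_1) = f(v_2) = 0$, and $f \equiv 1$ elsewhere; the only zero vertices are $v_1, v_2$, each dominated by the common in-neighbour $u$ of weight $2$, and the weight is $2 + (n-3) = n-1$. If instead $\Delta^-(D) \ge 2$, choose a vertex $w$ with two distinct in-neighbours $x_1, x_2$ and set $f(w) = 0$ with $f \equiv 1$ elsewhere; then $w$ is the unique zero vertex and is dominated by its two weight-$1$ in-neighbours $x_1, x_2$, again giving weight $n-1$. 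In either case $\gamma_I(D) \le n - 1 < n$, as required.

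I do not anticipate a genuinely hard step here: the entire argument hinges on the single structural insight that the two degree hypotheses correspond precisely to disabling the two distinct ways of Italian-dominating a zero vertex (an incoming weight-$2$ arc versus two incoming weight-$1$ arcs) and to preventing a weight-$2$ vertex from covering more than one zero vertex. The only points requiring mild care are the loopless assumption, so that the chosen neighbours are genuinely distinct from $u$ and $w$, and the bookkeeping confirming that the constructed functions are legitimate Italian dominating functions.
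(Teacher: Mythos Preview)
Your argument is correct in every detail: the constant function gives the upper bound, the injection $\phi:V_0\to V_2$ forced by the degree constraints gives the lower bound when $\Delta^+,\Delta^-\le 1$, and the two explicit weight-$(n-1)$ functions handle the contrapositive. Note, however, that the paper does not supply its own proof of this proposition; it is quoted without proof from Volkmann~\cite{V20}, so there is nothing here to compare your approach against.
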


\begin{prop} \cite{V20}
If $D$ is a directed path or a directed cycle of order $n$, then $\gamma_I(D) = n$.
\end{prop}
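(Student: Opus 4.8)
The plan is to establish matching upper and lower bounds, both of which are elementary once one observes the defining structural feature of directed paths and cycles: every vertex has in-degree at most $1$ and out-degree at most $1$. For the upper bound I would simply take the constant function $f \equiv 1$. This function has no vertex with $f(v) = 0$, so the Italian domination condition is satisfied vacuously, and its weight is exactly $n$; hence $\gamma_I(D) \le n$. (Alternatively this is immediate from the preceding proposition.) The real content is therefore the lower bound $\gamma_I(D) \ge n$.

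For the lower bound, fix any Italian dominating function $f$ of minimum weight and partition the vertices as $V_0, V_1, V_2$ according to their $f$-values. The weight of $f$ is $|V_1| + 2|V_2|$, while $n = |V_0| + |V_1| + |V_2|$, so it suffices to prove $|V_0| \le |V_2|$, since then
\[
\gamma_I(D) = |V_1| + 2|V_2| = n + (|V_2| - |V_0|) \ge n.
\]
The key step is to exploit the in-degree bound: a vertex $v \in V_0$ needs either an inneighbour assigned $2$ or two inneighbours assigned $1$, but in a directed path or cycle $v$ has at most one inneighbour, so the second option is impossible. Consequently each $v \in V_0$ has a (unique) inneighbour, and that inneighbour must lie in $V_2$. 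This already rules out, for a directed path, the source vertex belonging to $V_0$, which is consistent since that vertex has no inneighbour at all.

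I would then define the map $\phi \colon V_0 \to V_2$ sending each $v \in V_0$ to its unique inneighbour in $V_2$, and show it is injective. If $\phi(u) = \phi(v) = w$ for distinct $u, v \in V_0$, then $w$ would have out-degree at least $2$, contradicting the out-degree bound of directed paths and cycles. Hence $\phi$ is injective, giving $|V_0| \le |V_2|$ and completing the lower bound. The main point to get right is this interplay of the two degree bounds: the in-degree bound forces every zero-vertex to be covered by a $2$, while the out-degree bound prevents any $2$ from covering more than one zero-vertex. I expect the mild subtlety (rather than a genuine obstacle) to be the boundary case of the directed path, where one must confirm that the source cannot be assigned $0$ so that $\phi$ is well defined everywhere on $V_0$.
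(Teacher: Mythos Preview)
Your argument is correct. The paper does not supply its own proof of this proposition; it is quoted from~\cite{V20}, and in the paper's presentation the statement is an immediate corollary of the preceding proposition (also from~\cite{V20}) characterizing the digraphs with $\gamma_I(D)=n$ as precisely those with $\Delta^+(D),\Delta^-(D)\le 1$. You note this shortcut yourself; your injection $\phi\colon V_0\to V_2$ is exactly the argument that proves the relevant direction of that characterization, specialized to paths and cycles.
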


Kim~\cite{K20} considered the Italian domination number of the Cartesian and strong products of directed cycles, and proved the following.

\begin{theorem} \cite{K20}
If $m = 2r$ and $n = 2s$ for some positive integers $r, s$, then $\gamma_I(C_m \Box C_n) = \frac{mn}{2}$.
\end{theorem}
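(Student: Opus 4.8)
The plan is to establish matching lower and upper bounds, both equal to $\frac{mn}{2}$.

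For the lower bound I would invoke Volkmann's degree bound (the first Proposition quoted above) with no additional work. Every vertex of $C_m \Box C_n$ has out-degree exactly $2$, since from $(i,j)$ there is precisely one arc advancing each coordinate; hence $\Delta^+(C_m \Box C_n) = 2$, and the digraph has order $mn$. Volkmann's inequality then gives $\gamma_I(C_m \Box C_n) \ge \left\lceil \frac{2mn}{2+2} \right\rceil = \left\lceil \frac{mn}{2} \right\rceil$. Because $m$ and $n$ are both even, $mn$ is even, so the ceiling collapses to $\frac{mn}{2}$ and the lower bound is immediate.

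For the upper bound I would exhibit an explicit Italian dominating function of weight $\frac{mn}{2}$. Label the vertices $(i,j)$ with $i \in \mathbb{Z}_m$ and $j \in \mathbb{Z}_n$, so that the two in-neighbours of $(i,j)$ are $(i-1,j)$ and $(i,j-1)$. Define $f(i,j) = 1$ when $i+j$ is even and $f(i,j) = 0$ when $i+j$ is odd. Since $m$ and $n$ are even, this parity colouring is consistent around both directed cycles (wrapping $i$ by $m$ or $j$ by $n$ leaves the parity of $i+j$ unchanged), so exactly half of the $mn$ vertices receive value $1$ and the total weight is $\frac{mn}{2}$. To verify the Italian condition, take any vertex $(i,j)$ with $f(i,j) = 0$, that is, with $i+j$ odd. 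Its in-neighbours $(i-1,j)$ and $(i,j-1)$ both have coordinate sum $i+j-1$, which is even, so both are assigned $1$; thus every $0$-vertex has two in-neighbours labelled $1$, exactly as required.

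Combining the two bounds yields $\gamma_I(C_m \Box C_n) = \frac{mn}{2}$. I do not expect a serious obstacle in this case: the whole argument rests on recognising that the parity checkerboard is a valid Italian dominating function, and this is precisely where the hypothesis that \emph{both} $m$ and $n$ are even enters—if either modulus were odd, the colouring would clash upon wrapping around the corresponding cycle, and the construction would fail. The only point deserving care is the alignment of the construction with Volkmann's bound, and the bound is tight here exactly because each vertex has out-degree $2$, forcing a value-$1$ vertex to be able to cover at most one $0$-vertex through each of its two out-arcs.
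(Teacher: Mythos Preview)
Your proof is correct. The statement is quoted from Kim~\cite{K20} and the paper gives no standalone proof of it; however, the paper's main theorem re-derives it as the even--even case of a general formula, so there is something to compare against.

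Your upper-bound construction agrees with the paper's: both take the checkerboard independent set $\{(i,j):i+j\equiv 0\pmod 2\}$ and assign~$1$ off it and~$0$ on it. The lower bounds differ. You invoke Volkmann's degree bound directly, which is immediate here since $\Delta^+ = 2$ gives $\gamma_I \ge \lceil mn/2\rceil = mn/2$. The paper instead proves two structural lemmas showing that some optimal Italian dominating function has its zero set independent, and then deduces $\gamma_I \ge mn - \alpha(C_m\Box C_n)$; for $m,n$ both even this also yields $mn/2$. Your route is considerably shorter for this specific case, but Volkmann's bound only gives $\lceil mn/2\rceil$ and is not tight when $m$ or $n$ is odd (the true value is then $m(n+1)/2$), whereas the paper's independence argument is designed to handle all parities uniformly.
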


\begin{theorem} \cite{K20}
For an odd integer $n \ge 3$, $\gamma_I(C_2 \Box C_n) = n + 1$.
\end{theorem}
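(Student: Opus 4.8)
The plan is to prove matching upper and lower bounds. Throughout I would label the vertices of $C_2 \Box C_n$ by pairs $(i,j)$ with $i \in \{0,1\}$ and $j \in \mathbb{Z}_n$, so that the two in-neighbours of $(i,j)$ are $(i, j-1)$ (from the $C_n$ factor) and $(1-i, j)$ (from the $C_2$ factor); in particular every vertex has in-degree and out-degree exactly $2$. The key simplification I would record first is that, because each vertex has exactly two in-neighbours, a function $f$ is an Italian dominating function if and only if every vertex $v$ with $f(v) = 0$ satisfies $f(x_1) + f(x_2) \ge 2$, where $x_1, x_2$ are its in-neighbours. Indeed, for two values in $\{0,1,2\}$ the sum is at least $2$ exactly when one of them is $2$ or both equal $1$, which is precisely the Italian condition here.

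For the upper bound I would exhibit an explicit function of weight $n+1$. Set $f(i,j) = 1$ whenever $i \equiv j \pmod 2$ and $1 \le j \le n-1$, set $f(0,0) = 2$, and set $f = 0$ elsewhere. This places a single $1$ in each column $j \ge 1$ on alternating rows and a single $2$ in column $0$, for total weight $n+1$. Verifying the simplified Italian condition is then a routine check: for $j \ge 2$ the unique $0$-vertex of column $j$ has both in-neighbours of value $1$, while the $0$-vertices adjacent to column $0$ are dominated by the $2$ at $(0,0)$. The odd parity of $n$ is exactly what forces the two ends of the alternating pattern to clash at column $0$, which is why a $2$ rather than a $1$ is needed there.

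The heart of the argument is the lower bound $\gamma_I(C_2 \Box C_n) \ge n+1$. I would first reprove $\gamma_I \ge n$ by double counting: summing the inequality $f(x_1)+f(x_2) \ge 2$ over all $0$-vertices gives $2Z \le \sum_{u} f(u)\cdot d^+_0(u) \le 2W$, where $Z$ is the number of $0$-vertices, $W$ the weight, and $d^+_0(u) \le 2$ counts the $0$-valued out-neighbours of $u$; combined with $Z \ge 2n - W$ (since at most $W$ vertices are positive) this yields $W \ge n$. The crucial point is to analyse the equality case $W = n$: every inequality above must then be tight, forcing $f$ to take only values in $\{0,1\}$, with the $n$ ones forming a set $S$ such that every vertex of $S$ sends both of its out-arcs into $V \setminus S$ and every vertex of $V \setminus S$ receives both of its in-arcs from $S$.

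Finally I would derive a contradiction from this structure. A short check shows the two tightness conditions together force every arc of $C_2 \Box C_n$ to run between $S$ and $V \setminus S$, i.e.\ $(S, V \setminus S)$ is a proper $2$-colouring of the underlying undirected graph. But that underlying graph is the circular ladder $C_n \Box K_2$, which contains the odd cycle through one row $(0,0),(0,1),\dots,(0,n-1)$; an odd cycle is not bipartite, so no such $2$-colouring exists. Hence no Italian dominating function of weight $n$ exists, and together with the upper bound this gives $\gamma_I(C_2 \Box C_n) = n+1$. I expect the main obstacle to be pinning down the equality case cleanly and, in particular, checking that both tightness conditions are genuinely needed to conclude the $2$-colouring — this is exactly the step where the hypothesis that $n$ is odd enters.
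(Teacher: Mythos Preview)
Your argument is correct. Both the explicit weight-$(n+1)$ construction and the lower-bound analysis check out; in particular, the two tightness conditions (every $1$-vertex sends both out-arcs into the $0$-set, every $0$-vertex receives both in-arcs from the $1$-set) together do exactly exclude same-side arcs and hence force a bipartition of the underlying prism $C_n \Box K_2$, which is impossible for odd $n$.

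The paper itself only cites this statement from Kim~\cite{K20} and does not prove it directly, but its main theorem reproves it as a special case, and that route is genuinely different from yours. The paper first argues, via two modification lemmas, that some $\gamma_I$-function has its $0$-vertices forming an independent set; the lower bound then becomes $\gamma_I \ge mn - \alpha(C_m \Box C_n)$, and for $m=2$, odd $n$ one has $\alpha = n-1$, giving $\gamma_I \ge n+1$. Your approach bypasses the modification lemmas entirely: the double-counting gives $\gamma_I \ge n$ immediately, and the equality analysis is a clean extremal/parity argument tailored to $m=2$. What your method buys is brevity and directness for this case (and it transparently isolates where oddness of $n$ is used); what the paper's method buys is uniformity, since the same independent-set framework yields $\gamma_I(C_m \Box C_n)$ for all $m,n$ at once. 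It is worth noting that your equality case is secretly the same obstruction as the paper's: a bipartition $(S,V\setminus S)$ would make the $0$-set an independent set of size $n > \alpha$, so the two arguments meet at the independence number even though you phrase it via the odd cycle.
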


\begin{theorem} \cite{K20}
For an integer $n \ge 3$, $\gamma_I(C_3 \Box C_n) = 2n$.
\end{theorem}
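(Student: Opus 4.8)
Throughout, index the vertices of $C_3 \Box C_n$ by pairs $(i,j)$ with $i \in \mathbb{Z}_3$ and $j \in \mathbb{Z}_n$, so that the two in-neighbours of $(i,j)$ are $(i-1,j)$ (within its column) and $(i,j-1)$ (from the previous column), and its two out-neighbours are $(i+1,j)$ and $(i,j+1)$. I would prove the two bounds separately. For the upper bound I would build an Italian dominating function of weight $2n$ from a maximum independent set of the underlying undirected graph. Since each column $\{(0,j),(1,j),(2,j)\}$ induces a triangle, every independent set meets each column in at most one vertex, so $\alpha(C_3 \Box C_n) \le n$; conversely, choosing $r_0,\dots,r_{n-1} \in \mathbb{Z}_3$ with $r_j \neq r_{j+1}$ for all $j$ (a proper $3$-colouring of the cycle $C_n$, which exists for every $n \ge 3$) yields an independent set $I = \{(r_j,j) : j \in \mathbb{Z}_n\}$ of size $n$. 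Define $f(v)=0$ for $v \in I$ and $f(v)=1$ otherwise. Every vertex outside $I$ is assigned $1$ and requires nothing, while every $v=(r_j,j) \in I$ has both in-neighbours outside $I$ (they are neighbours of $v$, and $I$ is independent), so both are assigned $1$ and $v$ is dominated by two $1$'s. Thus $f$ is an Italian dominating function of weight $3n-|I| = 2n$, giving $\gamma_I(C_3\Box C_n) \le 2n$.

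For the lower bound, let $Z,O,T$ denote the numbers of vertices assigned $0,1,2$. Since the weight equals $O+2T$ and $Z+O+T=3n$, the weight equals $3n-Z+T$, so it suffices to prove $Z \le n+T$. Call a $0$-vertex \emph{light} if it is dominated by two $1$-valued in-neighbours and \emph{heavy} if it has a $2$-valued in-neighbour. Each edge of the torus is a single arc, so two adjacent light vertices would have one serving as an in-neighbour of the other, contradicting that a light vertex has only $1$-valued in-neighbours; hence the light vertices form an independent set, of which there are at most $\alpha(C_3\Box C_n)=n$. Letting $\mathcal Z$ denote the set of all $0$-vertices, proving $Z \le n+T$ is equivalent (by the Gallai identity $\alpha=|\mathcal Z|-\tau$ on the induced subgraph $G[\mathcal Z]$) to producing an independent subset of $\mathcal Z$ of size at least $Z-T$, i.e.\ to showing $\tau(G[\mathcal Z]) \le T$.

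The heart of the argument — and the step I expect to be the main obstacle — is this bound $\tau(G[\mathcal Z]) \le T$. Every arc $u \to v$ with $u,v$ both $0$ forces the remaining in-neighbour of its head $v$ to be a $2$ (the head cannot be light, since one of its in-neighbours is the $0$-vertex $u$), giving a map from such heads to $2$-vertices; but a single $2$ has two out-neighbours and can be charged by two heads, so this map yields only $Z-(\text{independent part}) \le 2T$, a factor of two too weak. To remove this doubling I would argue that it cannot occur globally: whenever a $1$ at $(i,j)$ has both out-neighbours light, the vertex $(i+1,j+1)$ — whose only in-neighbours are those two $0$'s — is forced to be nonzero, and similarly the $2\times 2$ blocks created by a $2$ dominating both out-neighbours force a nonzero on their fourth corner. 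I would encode this in a discharging scheme: give each vertex charge $f(v)-\tfrac23$, let each $2$ send $\tfrac23$ to each out-neighbour it dominates and let each light vertex pull $\tfrac13$ from each of its two $1$-valued in-neighbours. Then every $0$-vertex and every $2$ ends with nonnegative charge, and the only possible deficits occur at $1$'s serving two light out-neighbours; the structural facts above (light vertices are independent, and an overworked $1$ always abuts a forced compensating nonzero at the diagonal vertex $(i+1,j+1)$) should let me route the surplus so that no vertex finishes negative, whence $\sum_v\bigl(f(v)-\tfrac23\bigr)=\text{weight}-2n\ge 0$. Balancing these charges cleanly — most plausibly by amortizing across consecutive columns and exploiting the column-triangle structure — is the delicate part; everything else reduces to the bookkeeping above.
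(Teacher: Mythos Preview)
Your upper bound is correct and coincides with the paper's: put $1$ on the complement of a maximum independent set.

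The lower bound, however, is genuinely incomplete, and one of your two proposed routes is actually false. You reduce $Z\le n+T$ to the claim $\tau(G[\mathcal Z])\le T$ and then, separately, sketch a discharging. The $\tau$-claim fails already on $C_3\Box C_3$: take $f(0,0)=2$, $f(2,0)=f(1,1)=f(0,2)=f(2,2)=1$, and $f=0$ elsewhere. This is an Italian dominating function of weight $6$ with $T=1$, but $\mathcal Z=\{(1,0),(0,1),(2,1),(1,2)\}$ induces the two disjoint edges $\{(1,0),(1,2)\}$ and $\{(0,1),(2,1)\}$, so $\tau(G[\mathcal Z])=2>T$. Thus ``produce an independent subset of $\mathcal Z$ of size $Z-T$'' is too strong a target. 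Your discharging also stalls exactly where you say it does: the ``bad'' $1$ at $(1,1)$ in this same example ends at $-\tfrac13$, compensated only by the diagonal vertex $(2,2)$; you have not shown that this diagonal surplus is always available, and in general the diagonal vertex $(i{+}1,j{+}1)$ may itself be a $1$ feeding a light vertex at $(i{+}1,j{+}2)$, leaving no excess. So the ``delicate part'' is a real gap, not a formality.

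The paper sidesteps all of this. Rather than bounding an arbitrary Italian dominating function, it first \emph{modifies} an optimal one (Lemmas~2.1 and~2.2) so that the set of $0$-vertices is independent; then the bound $Z\le\alpha=n$ is immediate and $T$ plays no role. The modification lemmas are where the work goes, but they are local rewritings of the function and avoid any global counting. If you want to stay with a direct counting argument, there is a clean one that exploits $m=3$ specifically: for each column $j$, show $z_j\le 1+t_{j-1}$ (check the four cases $z_j\in\{0,1,2,3\}$; when $z_j\ge 2$ some $0$ in column $j$ has its column in-neighbour also $0$, forcing its row in-neighbour in column $j-1$ to be a $2$), and sum over $j$. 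That replaces both the $\tau$-claim and the discharging.
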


\begin{theorem} \cite{K20}
For positive integers $m, n \ge 2$, $\gamma_I(C_m \otimes C_n) = \left \lceil \frac{mn}{2} \right \rceil$.
\end{theorem}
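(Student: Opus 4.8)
The plan is to prove matching lower and upper bounds on $\gamma_I(C_m \otimes C_n)$, both equal to $\lceil mn/2 \rceil$. Throughout I identify the vertex set with $\mathbb{Z}_m \times \mathbb{Z}_n$, so that the in-neighbours of $(i,j)$ are exactly $(i-1,j)$, $(i,j-1)$, and $(i-1,j-1)$ (indices read modulo $m$ and $n$). Since every in-degree equals $3$, a convenient first observation is that a vertex $v$ with $f(v)=0$ is Italian-dominated if and only if the total weight on its three in-neighbours is at least $2$: an in-weight of exactly $2$ forces either one in-neighbour of value $2$ or two of value $1$, and any larger in-weight likewise contains such a configuration. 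I will use this ``in-weight at least $2$'' reformulation for both directions.

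For the upper bound I would exhibit a single function that works for all parities: set $f(i,j)=1$ when $i+j$ is even (treating $i\in\{0,\dots,m-1\}$ and $j\in\{0,\dots,n-1\}$ as genuine integers) and $f(i,j)=0$ otherwise. Counting cells of even coordinate-sum in an $m\times n$ array gives weight exactly $\lceil mn/2\rceil$. To check this is an Italian dominating function, consider a $0$-vertex $(i,j)$, i.e.\ one with $i+j$ odd. When no wrap-around occurs ($i\ge 1$ and $j\ge 1$), the in-neighbours $(i-1,j)$ and $(i,j-1)$ both have even coordinate-sum and hence value $1$. The only subtlety is the seam, where $i=0$ or $j=0$; here I would carry out a short case analysis on the parities of $m$ and $n$, showing that exactly two of the three in-neighbours still receive value $1$ (the diagonal in-neighbour $(i-1,j-1)$ taking over the role when the relevant modulus is odd). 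This yields $\gamma_I(C_m\otimes C_n)\le \lceil mn/2\rceil$ uniformly.

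The lower bound is where the real work lies, since the general estimate $\gamma_I(D)\ge\lceil 2n/(2+\Delta^+(D))\rceil$ only gives $\lceil 2mn/5\rceil$, far short of the target. My plan is a column-pairing argument. Fix two consecutive columns $j$ and $j+1$, and abbreviate $b_i=f(i,j)$ and $c_i=f(i,j+1)$. The in-neighbours of $(i,j+1)$ are $(i-1,j+1)$, $(i,j)$, and $(i-1,j)$, so the domination requirement on column $j+1$ reads: whenever $c_i=0$ we have $c_{i-1}+b_{i-1}+b_i\ge 2$. I claim $\sum_{i}(b_i+c_i)\ge m$. To prove this I would set $s_i=b_i+c_i$ and call an index \emph{deficient} if $s_i=0$; for a deficient index $i$ the requirement (with $b_i=0$) forces $c_{i-1}+b_{i-1}\ge 2$, i.e.\ $s_{i-1}\ge 2$. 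Since $i\mapsto i-1$ is a bijection of $\mathbb{Z}_m$ and the predecessor of a deficient index is never itself deficient, the deficient indices and their predecessors form disjoint sets of equal size $d$; the remaining $m-2d$ indices are non-deficient with $s_i\ge 1$, so $\sum_i s_i \ge 2d + (m-2d) = m$. Summing this per-pair inequality over all $n$ cyclically consecutive column pairs counts each column's weight twice, yielding $2\sum_{i,j}f(i,j)\ge mn$, hence $\gamma_I(C_m\otimes C_n)\ge mn/2$, and integrality upgrades this to $\lceil mn/2\rceil$. The main obstacle I anticipate is precisely isolating this correct local inequality and its discharging proof; the reformulation to in-weight and the bijective pairing of each deficient index with its necessarily heavier predecessor are the steps that make the argument close.
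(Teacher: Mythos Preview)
The paper does not supply its own proof of this theorem: it is quoted verbatim from Kim~\cite{K20} as background, so there is no in-paper argument against which to compare your proposal.

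That said, your argument is correct and self-contained. The reformulation ``$f(v)=0$ is dominated iff the in-weight is at least $2$'' is valid because the in-degree is $3$ and the codomain is $\{0,1,2\}$. Your upper-bound function $f(i,j)=[\,i+j\text{ even}\,]$ indeed has weight $\lceil mn/2\rceil$, and the seam case-analysis goes through: at a $0$-vertex on the boundary, exactly one of the axial in-neighbours may flip parity due to an odd modulus, but then the diagonal in-neighbour $(i-1,j-1)$ picks up the same flip and lands on even sum, so two of the three in-neighbours always carry value $1$. For the lower bound, the column-pair inequality $\sum_i (b_i+c_i)\ge m$ is proved exactly as you describe: each deficient index (with $s_i=0$) forces $s_{i-1}\ge 2$, the map $i\mapsto i-1$ injects the deficient set into the non-deficient set, and summing gives $0\cdot d + 2\cdot d + 1\cdot(m-2d)=m$. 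Summing over the $n$ cyclic column pairs double-counts the total weight, yielding $2\lvert f\rvert\ge mn$. Both halves are clean; the only cosmetic point is that the upper-bound seam verification should be written out in full rather than left as ``a short case analysis''.
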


Kim~\cite{K20} also stated the following conjecture.

\begin{conj} \cite{K20}
For an odd integer $n$, $\gamma_I(C_4 \Box C_n) = 2n + 2$.
\end{conj}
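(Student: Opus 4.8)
The plan is to fix coordinates $V(C_4 \Box C_n) = \mathbb{Z}_4 \times \mathbb{Z}_n$, so that each vertex $(i,j)$ has exactly two inneighbours, $(i-1,j)$ and $(i,j-1)$. Because the indegree is always $2$, the Italian condition simplifies cleanly: a vertex $(i,j)$ with $f(i,j)=0$ is dominated if and only if $f(i-1,j)+f(i,j-1)\ge 2$. I would record this reformulation first, since it drives every later step.

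For the upper bound I would exhibit an explicit function of weight $2n+2$. Start from the alternating assignment $f(i,j)=1$ if $i+j$ is even and $f(i,j)=0$ otherwise, which has weight $2n$ and, away from the ``seam'' between columns $n-1$ and $0$, satisfies the reformulated condition because every $0$-vertex there has both inneighbours equal to $1$. Since $n$ is odd, the parity of $i+j$ is inconsistent around the $C_n$-factor, and exactly the two $0$-vertices $(1,0)$ and $(3,0)$ lose a unit of inneighbour weight at the seam. Raising $f(1,n-1)$ and $f(3,n-1)$ from $0$ to $1$ repairs both deficiencies (raising values never harms domination), producing a valid Italian dominating function of weight $2n+2$, so $\gamma_I(C_4\Box C_n)\le 2n+2$.

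The lower bound is the heart of the matter. Writing $a=|f^{-1}(1)|$, $b=|f^{-1}(2)|$, and $W=a+2b$, I would sum the closed in-neighbourhood weights $g(i,j)=f(i,j)+f(i-1,j)+f(i,j-1)$ over all $4n$ vertices to get $\sum_v g(v)=3W$; bounding each term from below ($\ge 2$ on $0$- and $2$-vertices, $\ge 1$ on $1$-vertices) yields $W\ge 2n$ and, more usefully, the identity $E=4a+6b-8n$ for the total excess $E\ge 0$, which is always even. Equality $W=2n$ forces $b=0$ and $E=0$, hence the perfect alternating pattern, which cannot close up on the torus for odd $n$; thus $W\ge 2n+1$. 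It remains to exclude $W=2n+1$, where the identity forces $E=4-2b$ and so $b\in\{0,1,2\}$.

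The cleanest case is $b=0$, $E=4$: all values lie in $\{0,1\}$, every $0$-vertex has both inneighbours equal to $1$ (so no arc joins two $0$-vertices), and the number of monochromatic arcs with both ends in $f^{-1}(1)$ equals $E=4$. Counting such arcs around each directed column ($4$-cycle, even) forces an even number per column, and around each directed row ($n$-cycle, odd) forces an odd number per row; with total $4$ this pins down exactly one monochromatic arc per row, all horizontal, and perfectly alternating columns. Encoding each column by the bit $\beta_j\in\{0,1\}$ recording which parity class of rows carries the $1$'s, horizontal monochromatic arcs occur precisely where $\beta_j=\beta_{j+1}$, two at a time, whence $|\{j:\beta_j=\beta_{j+1}\}|=2$; but on a cycle of odd length the number of such agreements is necessarily odd, a contradiction. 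I expect the main obstacle to be the remaining cases $b=1$ and $b=2$: a $0$-vertex may now be dominated by a single $2$, so arcs between $0$-vertices reappear and the clean column/row parity count must be re-derived while tracking the one or two $2$-vertices (each of whose two outneighbours is a $0$-vertex it alone dominates). I anticipate that the tightened excess budgets ($E=2$ and $E=0$) again force an odd-versus-even parity clash on the directed $n$-cycles, completing the proof that $W\ge 2n+2$ and hence $\gamma_I(C_4\Box C_n)=2n+2$.
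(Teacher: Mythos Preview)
Your upper bound and the excess identity $E=4a+6b-8n$ are correct, and your exclusion of $W=2n$ and of $W=2n+1$ with $b=0$ is a clean parity argument. The genuine gap is precisely where you flag it: the subcases $W=2n+1$ with $b\in\{1,2\}$ are not proved, only anticipated. Once a $2$ is present, a $0$-vertex may be dominated by a single in-neighbour, so $0$--$0$ arcs reappear and the excess is no longer simply the count of $1$--$1$ arcs; the ``odd per row, even per column'' bookkeeping that drove the $b=0$ contradiction breaks down and must be rebuilt while tracking how the one or two $2$-vertices redistribute the (now tiny) excess budgets $E=2$ and $E=0$. That rebuild is not routine, and until it is written out the lower bound $\gamma_I(C_4\Box C_n)\ge 2n+2$ is not established.

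The paper circumvents this case analysis entirely, via a structural lemma valid for all $m,n$: among the $\gamma_I(C_m\Box C_n)$-functions there is always one whose zero set is independent, obtained by local weight-shifting modifications that eliminate adjacent zeros without increasing the total weight. Granting that, $\gamma_I(C_m\Box C_n)\ge mn-\alpha(C_m\Box C_n)$ is immediate, and since $\alpha(C_4\Box C_n)=2(n-1)$ for odd $n$ one gets $\gamma_I\ge 2n+2$ with no reference to the value of $b$. Your counting approach is tailored to $m=4$ and, even if completed, would not obviously generalize; the paper's independence-number route handles every $C_m\Box C_n$ at once.
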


In this work, we verify this conjecture and completely settle the question of the Italian dominating number of the Cartesian product of cycles.

\section{Main Result}

Throughtout this section, we consider the vertex set of the directed graph $C_m \Box C_n$ to be $\mathbb{Z}_m \times \mathbb{Z}_n$, with arcs of the form $(i, j) \to (i + 1, j)$ and $(i, j) \to (i, j + 1)$.  We note that $C_m \Box C_n$ and $C_n \Box C_m$ are isomorphic and will use this fact implicitly in deriving our results.  Our approach is to establish properties regarding the set of vertices assigned zero that hold in at least one $\gamma_I(C_m \Box C_n)$-function.  We first show we can avoid having a line of three vertices assigned zero.

\begin{lemma}
There exists a $\gamma_I(C_m \Box C_n)$-function such that no three consecutive vertices (horizontally or vertically) are assigned 0.
\end{lemma}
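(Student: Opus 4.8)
The plan is to argue by an extremal (minimal-counterexample) argument. Among all $\gamma_I(C_m \Box C_n)$-functions I would choose one, call it $f$, that minimizes the number of \emph{zero-lines}, i.e.\ triples of three consecutive vertices lying in a common row or common column that are all assigned $0$. If this minimum is $0$ we are done, so suppose $f$ contains a zero-line. By the isomorphism $C_m \Box C_n \cong C_n \Box C_m$ I may assume it is horizontal, say $(i,j),(i,j+1),(i,j+2)$ all receive $0$.

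First I would extract the forced local structure caused by the directedness. The inneighbours of $(i,j+1)$ are $(i-1,j+1)$ and $(i,j)$; since $f(i,j)=0$, the vertex $(i,j+1)$ cannot be dominated by two inneighbours assigned $1$, so it must have an inneighbour assigned $2$, forcing $f(i-1,j+1)=2$. The identical argument applied to $(i,j+2)$, whose row-inneighbour $(i,j+1)$ equals $0$, forces $f(i-1,j+2)=2$. Thus two $2$'s sit directly above the last two zeros of the line.

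Then comes the swap. I would define $f'$ from $f$ by setting $f'(i,j+1)=1$ and $f'(i-1,j+1)=1$, leaving all other values unchanged. The weight is preserved ($+1$ at $(i,j+1)$ and $-1$ at $(i-1,j+1)$), so $f'$ again has weight $\gamma_I(C_m \Box C_n)$. Raising a $0$ to a $1$ can only destroy zero-lines and never create them, while lowering a $2$ to a $1$ changes no vertex's zero/nonzero status and so affects no zero-line; since the targeted triple is destroyed, $f'$ has strictly fewer zero-lines than $f$. Hence, provided $f'$ is still an Italian dominating function, the minimality of $f$ is contradicted and the lemma follows.

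Verifying that $f'$ remains a valid Italian dominating function is the one step requiring care, and I expect it to be the main obstacle, because lowering a $2$ can strip a neighbour of its only dominator. The reason I would lower $(i-1,j+1)$ rather than $(i-1,j+2)$ is precisely to contain this ripple: the only vertices that could have relied on $(i-1,j+1)$ are its two outneighbours $(i,j+1)$ and $(i-1,j+2)$, and after the swap $(i,j+1)=1$ needs no domination while $(i-1,j+2)=2$ is untouched and needs none either. It then remains only to confirm that the vertices still assigned $0$ remain dominated: $(i,j+2)$ keeps its dominator $f'(i-1,j+2)=2$, and $(i,j)$ has inneighbours $(i-1,j),(i,j-1)$ that the swap does not touch. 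The vertical case is symmetric under the coordinate interchange, so handling the horizontal triple suffices.
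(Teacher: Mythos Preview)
Your argument is correct and is in fact cleaner than the paper's. Both proofs start the same way: pick a $\gamma_I$-function $f$ minimizing the number of zero-triples, locate a triple of consecutive zeros, and use the Italian condition to force two $2$'s sitting alongside the last two zeros of the triple. The divergence is in the modification step. The paper lowers the \emph{later} of the two forced $2$'s (in your coordinates, $(i-1,j+2)$) all the way to $0$, raises the tail vertex $(i,j+2)$ to $1$, and may have to bump a third vertex $(i-1,j+3)$; because a brand-new $0$ is introduced, the paper must then run through several subcases (and eventually a second modified function) to rule out the creation of new zero-triples through that vertex. You instead lower the \emph{earlier} forced $2$ at $(i-1,j+1)$ only to $1$ and raise the middle vertex $(i,j+1)$; the untouched $2$ at $(i-1,j+2)$ then absorbs the only possible domination failure, and since no value drops to $0$ no new zero-triples can appear. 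This removes the entire case analysis while keeping the weight fixed and strictly decreasing the zero-triple count, so the extremal contradiction goes through in one shot.
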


\begin{proof}
Suppose by way of contradiction that every $\gamma_I(C_m \Box C_n)$-function has three consecutive vertices assigned 0.  Let $f$ be a $\gamma_I(C_m \Box C_n)$-function such that the number of sets of three consecutive vertices assigned 0 is minimized.  Then there exist consecutive vertices $(i, k)$, $(i + 1, k)$, $(i + 2, k)$ such that $f((i, k)) = f((i + 1, k)) = f((i + 2, k)) = 0$.  Now, since $f$ is a $\gamma_I(C_m \Box C_n)$-function, we require that $f((i + 1, k - 1)) = f((i + 2, k - 1)) = 2$.  Consider the function $g$ given by
\[
g((x, y)) = \begin{cases}
0, & (x, y) = (i + 2, k - 1); \\
1, & (x, y) = (i + 2, k); \\
\max\{1, f((x, y))\}, & (x, y) = (i + 3, k - 1); \\
f((x, y)), & \mbox{otherwise.}
\end{cases}
\]
If $f(i + 3, k - 1) \neq 0$, then $g$ is an Italian dominating function with smaller weight than $f$, a contradiction.  If either $f((i + 2, k - 2)) \neq 0$ or $f((i + 2, k - 3)) \neq 0$, then $g$ is a $\gamma_I(C_m \Box C_n)$-function with fewer sets of three consecutive vertices assigned 0, contradicting the definition of $f$.  Otherwise, we have $f((i + 3, k - 1) = f((i + 2, k - 2)) = f((i + 2, k - 3)) = 0$, and since $f$ is a $\gamma_I(C_m \Box C_n)$-function, we require that $f((i + 1, k - 2)) = 2$.  Now consider the function $h$ given by
\[
h((x, y)) = \begin{cases}
0, & (x, y) = (i + 2, k - 1); \\
1, & (x, y) \in \{(i + 1, k - 2), (i + 2, k - 2), (i + 2, k), (i + 3, k - 1)\}; \\
f((x, y)), & \mbox{otherwise.}
\end{cases}
\]
Then $h$ is a $\gamma_I(C_m \Box C_n)$-function with fewer sets of three consective vertices assigned 0, contradicting the definition of $f$.  The result follows.
\end{proof}

We now show we can further avoid having adjacent vertices assigned zero.

\begin{lemma} \label{0indep}
There exists a $\gamma_I(C_m \Box C_n)$-function such that no pair of adjacent vertices is assigned 0.
\end{lemma}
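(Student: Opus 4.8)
The plan is to layer a secondary extremal choice on top of the previous lemma. By that lemma there is a $\gamma_I(C_m \Box C_n)$-function with no three consecutive $0$'s; among all such functions I choose $f$ minimizing the number of (horizontally or vertically) adjacent pairs of vertices both assigned $0$. Since every such $f$ has weight exactly $\gamma_I(C_m \Box C_n)$, any weight-preserving modification that keeps the ``no three consecutive $0$'s'' property but strictly decreases the number of adjacent $0$-pairs will contradict the choice of $f$; that is the contradiction I aim for. The key simplification is that any move which only raises values from $0$ to $1$ and lowers values from $2$ to $1$ introduces no new $0$'s, so it automatically preserves the ``no three consecutive $0$'s'' property and keeps the weight fixed. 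I will use only such moves.

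Assume for contradiction that $f$ has an adjacent pair of $0$'s. Using $C_m \Box C_n \cong C_n \Box C_m$, I may assume the pair is horizontal, say $f((i,k)) = f((i+1,k)) = 0$; the previous lemma forces $f((i-1,k)) \ge 1$ and $f((i+2,k)) \ge 1$. Since the in-neighbours of $(i+1,k)$ are $(i,k)$ and $(i+1,k-1)$ and the first is $0$, Italian domination of $(i+1,k)$ forces $f((i+1,k-1)) = 2$. The first move is the weight-preserving swap $g$ that lowers $f((i+1,k-1))$ from $2$ to $1$ and raises $f((i+1,k))$ from $0$ to $1$. This breaks the horizontal domino and leaves $(i,k)$ dominated exactly as before, so the only vertex whose domination can be destroyed is the other out-neighbour $(i+2,k-1)$ of $(i+1,k-1)$. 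If $f((i+2,k-1)) \ne 0$, or if $f((i+2,k-1)) = 0$ but $f((i+2,k-2)) \ge 1$ (so that after the swap $(i+2,k-1)$ has two in-neighbours assigned $1$, or one assigned $2$), then $g$ is again a $\gamma_I$-function with no three consecutive $0$'s and strictly fewer adjacent $0$-pairs, a contradiction.

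This leaves the obstruction case $f((i+2,k-1)) = f((i+2,k-2)) = 0$, which is the crux. Here $(i+2,k-2),(i+2,k-1)$ form a vertical domino whose domination, applied to $(i+2,k-1)$ with its other in-neighbour $(i+2,k-2)$ equal to $0$, forces the \emph{same} vertex $(i+1,k-1) = 2$; thus one $2$ simultaneously props up both dominoes, so a single swap cannot succeed. The plan is then to locate a second forced $2$ near the vertical domino by examining the domination of $(i+2,k-2)$ through its in-neighbours $(i+1,k-2)$ and $(i+2,k-3)$ (with $f((i+2,k-3)) \ge 1$ forced by the vertical no-triple condition), and to apply a balanced redistribution $h$ in the spirit of the previous lemma: lower two $2$'s to $1$ and raise the appropriate $0$'s among $(i+1,k)$, $(i+2,k-2)$, and their partners to $1$, keeping the weight fixed while deleting both dominoes and creating no new $0$. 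I expect the main obstacle to be exactly this case analysis: guaranteeing that a second $2$ is available to pay for raising $(i+2,k-2)$, and verifying in each sub-case (according to whether $f((i+1,k-2))$ or $f((i+2,k-3))$ equals $2$, or they jointly dominate $(i+2,k-2)$ via two $1$'s) that the redistribution is weight-preserving, keeps every zero dominated, and strictly lowers the count of adjacent $0$-pairs, contradicting the minimality of $f$.
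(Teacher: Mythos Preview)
Your setup and first move $g$ match the paper exactly, and your observation that $2\to 1$ / $0\to 1$ moves automatically preserve the no-triple property is correct and pleasant. The gap is in the obstruction case. Your plan hinges on locating a \emph{second} forced $2$ near the vertical domino so that two $2\to 1$ moves pay for two $0\to 1$ moves, but that second $2$ need not exist locally: in the sub-case you yourself flag, $f((i+1,k-2))=f((i+2,k-3))=1$, the domination of $(i+2,k-2)$ is satisfied by two $1$'s and the only $2$ in sight is $(i+1,k-1)$. Lowering that single $2$ forces you to raise $(i+1,k)$ (else $(i+1,k)$ is undominated), and then $(i+2,k-1)$ loses domination unless you also raise $(i+2,k-2)$ or $(i+2,k-1)$; either way the weight goes up and there is nothing left to lower. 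No combination of $2\to 1$ and $0\to 1$ moves alone can close this sub-case. Even when $(i+2,k-3)=2$ is available, lowering it can destroy the domination of its other out-neighbour $(i+3,k-3)$, so that branch does not close locally either.

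The paper escapes this by abandoning your restriction. When $f((i+1,k-2))\neq 0$ it uses a $2\to 0$ move (the function $h$ sets $(i+1,k-1)$ to $0$ and raises $(i+1,k)$ and $(i+2,k-1)$ to $1$), which creates a new zero; and when $f((i+1,k-2))=0$ the obstruction pattern recurs shifted by $(1,-2)$, forcing $f((i+1+j,\,k-1-2j))=2$ for every $j\ge 0$. The paper then performs a \emph{global} redistribution (the function $p$) along this entire diagonal set $S$, replacing all those $2$'s by $1$'s and raising the translate $S+(0,1)$ to $1$. This iteration-plus-global-fix is the missing idea in your outline; a purely local argument with only $2\to 1$ and $0\to 1$ moves cannot finish the proof.
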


\begin{proof}
Suppose by way of contradiction that every $\gamma_I(C_m \Box C_n)$-function has a pair of adjacent vertices assigned 0.  Let $f$ be a $\gamma_I(C_m \Box C_n)$-function with no set of three consecutive vertices assigned 0 (whose existence is guaranteed by the previous lemma) such that the number of pairs of adjacent vertices assigned 0 is minimized.  Then without loss of generality, there exist adjacent vertices $(i, k)$, $(i + 1, k)$ such that $f((i, k)) = f((i + 1, k)) = 0$. Then we have $f((i - 1, k)) \neq 0$ and $f((i + 1, k - 1)) = 2$.  

Consider the function $g$ given by
\[
g((x, y)) = \begin{cases}
1, & (x, y) \in \{(i + 1, k - 1), (i + 1, k)\}; \\
f((x, y)), & \mbox{otherwise.}
\end{cases}
\]
If either $f((i + 2, k - 1)) \neq 0$ or $f((i + 2, k - 2)) \neq 0$, then $g$ is a $\gamma_I(C_m \Box C_n)$-function with fewer pairs of adjacent vertices assigned 0, contradicting the definition of $f$.  Otherwise, we have $f((i + 2, k - 1)) = f((i + 2, k - 2))  = 0$.

Now, consider the function $h$ given by
\[
h((x, y)) = \begin{cases}
0, & (x, y) = (i + 1, k - 1); \\
1, & (x, y) \in \{ (i + 1, k), (i + 2, k - 1)\}; \\
f((x, y)), & \mbox{otherwise.}
\end{cases}
\]
If $f((i + 1, k - 2)) \neq 0$, then $h$ is a $\gamma_I(C_m \Box C_n)$-function with fewer pairs of adjacent vertices assigned 0, contradicting the definition of $f$.

Finally, we may assume $f((i + 2, k - 1)) = f((i + 1, k - 2)) = f((i + 2, k - 2)) = 0$.  Then $f((i, k - 2)) \neq 0$ and $f((i + 2, k - 3)) = 2$.  By the arguments above, we may assume that $f((i + 1 + j, k - 1 -2j)) = 2$ for all $j \ge 0$.  Let $S$ be the set of vertices of the form $(i + 1 + j, k - 1 - 2j)$.  

Suppose $S + (0, 1) = S$.  It follows that $|S| \ge mn / 2$, so $f$ has weight at least $mn$.  If $f$ has weight greater than $mn$, then $\mathbf{1}$ is an Italian dominating function of smaller weight, a contradiction.  Otherwise, $\mathbf{1}$ is a $\gamma_I(C_m \Box C_n)$-function with no zeros, contradicting our assumption on $f$.

Otherwise, $S + (0, 1) \cap S = \emptyset$.  Then by construction Consider the function $p$ given by
\[
p((x, y)) = \begin{cases}
1, & (x, y) \in S \cup S + (0, 1); \\
f((x, y)), & \mbox{otherwise.}
\end{cases}
\]
Then $p$ is a $\gamma_I(C_m \Box C_n)$-function with fewer pairs of adjacent vertices assigned 0, contradicting the definition of $f$.  The result follows.
\end{proof}

It therefore follows that there exists a $\gamma_I(C_m \Box C_n)$-function where the vertices assigned zero form an independent set.  We hence establish the independence number $\alpha$ for Cartesian products of cycles. 

\begin{lemma}
\[
\alpha(C_m \Box C_n) = \begin{cases}
\frac{mn}{2}, & m \equiv n \equiv 0 \pmod{2}; \\
\frac{m (n - 1)}{2}, & m \equiv 0, n \equiv 1 \pmod{2}; \\
\frac{m (n - 1)}{2}, & m \equiv n \equiv 1 \pmod{2}, m \ge n.
\end{cases}
\]
\end{lemma}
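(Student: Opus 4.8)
The plan is to work with the underlying undirected graph, since a set is independent in the digraph $C_m \Box C_n$ (no arc in either direction between two of its vertices, as in Lemma~\ref{0indep}) exactly when it is independent in the underlying toroidal grid, in which every vertex has the four neighbours $(i \pm 1, j)$ and $(i, j \pm 1)$. First I would record the two easy upper bounds coming from the cycles sitting inside the grid. For fixed $i$ the vertices $\{(i,j) : j \in \mathbb{Z}_n\}$ induce a copy of $C_n$, so any independent set meets this row in at most $\alpha(C_n) = \lfloor n/2 \rfloor$ vertices, giving $\alpha(C_m \Box C_n) \le m \lfloor n/2 \rfloor$; symmetrically the columns give $\alpha(C_m \Box C_n) \le n \lfloor m/2 \rfloor$. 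Evaluating the smaller of these in each case yields the claimed values $mn/2$ (both even) and $m(n-1)/2$ (the other two cases), where in the all-odd case the hypothesis $m \ge n$ is exactly what makes the row bound $m(n-1)/2$ no larger than the column bound $n(m-1)/2$.

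It then remains to construct independent sets attaining these bounds, and the unifying idea is to assign to each row $i$ a maximum independent set $S_i$ of its induced cycle $C_n$ and take $\bigcup_i \{(i,c): c \in S_i\}$. Such a set is independent precisely when adjacent rows receive column-disjoint sets, i.e.\ $S_i \cap S_{i+1} = \emptyset$ for all $i \in \mathbb{Z}_m$ (indices taken cyclically). For $n$ odd the maximum independent sets of $C_n$ are the $n$ rotations $E_s = \{s, s+2, \ldots, s+(n-3)\} \pmod n$, and a short computation shows $E_s \cap E_t = \emptyset$ if and only if $t \equiv s \pm 1 \pmod n$; hence admissible assignments $i \mapsto E_{s(i)}$ are exactly the graph homomorphisms $s : C_m \to C_n$. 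For $n$ even the only maximum independent sets are the two parity classes, whose disjointness graph is $K_2$, so admissible assignments are the homomorphisms $C_m \to K_2$.

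Each construction thus reduces to the existence of a homomorphism from $C_m$ into the relevant disjointness graph, equivalently a closed walk of length $m$ in it. A closed walk of length $m$ exists in $K_2$ iff $m$ is even, and in $C_n$ (with $n$ odd) iff $m$ is even or $m \ge n$; this matches the three cases exactly: both even ($m$ even, or simply take the checkerboard $\{(i,j): i+j \text{ even}\}$, which is independent as the torus is then bipartite); $m$ even and $n$ odd ($m$ even, so alternate $E_0, E_1$); and both odd with $m \ge n$ (here $m$ is odd and $m \ge n$, so take a walk winding once around $C_n$, padded to length $m$ by $(m-n)/2$ back-and-forth steps, which is possible since $m-n$ is even). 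I expect the all-odd construction to be the main obstacle: one must recognize that $m \ge n$ is precisely the homomorphism-existence condition and verify the disjointness pattern of the rotations $E_s$, since the naive scheme of alternating two complementary sets fails to close up consistently around an odd number of rows.
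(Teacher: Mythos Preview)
Your argument is correct. The upper bound via pigeonhole on the copies of $C_n$ (respectively $C_m$) inside the torus is exactly what the paper does, and your constructions all check out: the disjointness graph on the maximum independent sets of $C_n$ is indeed $K_2$ when $n$ is even and $C_n$ when $n$ is odd (since for odd $n$ every maximum independent set has gap pattern $2,2,\ldots,2,3$ and hence is some rotation $E_s$, with $E_s \cap E_t = \emptyset$ iff $t \equiv s \pm 1$), and the closed-walk criteria you state for $K_2$ and for odd $C_n$ are standard.

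Where you differ from the paper is in the constructions. The paper simply writes down an explicit independent set in each case: the checkerboard $\{(i,j): i+j \equiv 0 \pmod 2\}$ when both are even; the checkerboard with one column deleted when $m$ is even and $n$ is odd; and the parametrised family $\{(x+2y,\,x): 1 \le x \le m,\ 1 \le y \le (n-1)/2\}$ when both are odd with $m \ge n$. Your route instead packages all three cases into one mechanism---choose a homomorphism $C_m \to C_n$ (or $C_m \to K_2$) and pull back the rotations $E_s$---which is more conceptual and, in particular, explains transparently \emph{why} the hypothesis $m \ge n$ is exactly what is needed in the all-odd case: it is the odd-girth condition for the existence of a homomorphism between odd cycles. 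The paper's ad~hoc formula is quicker to verify but gives no such explanation; your framework would also adapt immediately to related products, at the cost of the extra verification that the $E_s$ really are all the maximum independent sets and that their disjointness graph is $C_n$.
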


\begin{proof}
\begin{description}
\item[$m \equiv n \equiv 0 \pmod{2}$:] Suppose $I$ is an independent set of size greater than $mn / 2$.  Then there exists a row containing more than $m / 2$ vertices of $I$.  Then there exists a pair of adjacent vertices of $I$, a contradiction.  Since there exists an independent set of size $mn/2$ by taking all vertices $(i, j)$ such that $i + j \equiv 0 \pmod{2}$, the result follows.

\item[$m \equiv 0, n \equiv 1 \pmod{2}$:] Suppose $I$ is an indepedent set of size greater than $m (n - 1) / 2$.  Then there exists a column containing more than $(n - 1) / 2$ vertices of $I$.  Then there exists a pair of adjacent vertices of $I$, a contradiction.  Since there exists an independent set of size $m (n - 1) / 2$ by taking all vertices $(i, j)$ such that $i + j \equiv 0 \pmod{2}$ and $j \neq n$, the result follows.

\item[$m \equiv n \equiv 1 \pmod{2}$:] Suppose $I$ is an independent set of size greater than $m (n - 1) / 2$.  Then there exists a column containing more than $(n - 1) / 2$ vertices of $I$.  Then there exists a pair of adjacent vertices of $I$, a contradiction.  Since there exists an independent set of size $m (n - 1) / 2$ by taking all vertices $(i, j) = (x + 2y, x)$, $1 \le x \le m$, $1 \le y \le (n - 1) /2$, the result follows.
\end{description}
\end{proof}

We now have the tools to establish our main result.

\begin{theorem} \label{alpha}
\[
\gamma_I(C_m \Box C_n) = \begin{cases}
\frac{mn}{2}, & m \equiv n \equiv 0 \pmod{2}; \\
\frac{m(n+1)}{2}, & m \equiv 0, n \equiv 1 \pmod{2}; \\
\frac{m(n + 1)}{2}, & m \equiv n \equiv 1 \pmod{2}, m \ge n.
\end{cases}
\]
\end{theorem}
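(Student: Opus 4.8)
The plan is to prove the single clean identity
\[
\gamma_I(C_m \Box C_n) = mn - \alpha(C_m \Box C_n),
\]
from which all three cases follow immediately by substituting the values of $\alpha$ computed in the preceding lemma. The starting observation is a weight-counting identity: if $f$ is any Italian dominating function and $Z$, $O$, $T$ denote the sets of vertices assigned $0$, $1$, $2$ respectively, then since $|Z| + |O| + |T| = mn$ we have
\[
\sum_{v} f(v) = |O| + 2|T| = mn - |Z| + |T|.
\]

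For the lower bound I would invoke the existence---guaranteed by Lemma~\ref{0indep} together with its predecessor---of a $\gamma_I(C_m \Box C_n)$-function $f$ whose zero set $Z$ is independent. For such an $f$ we have $|Z| \le \alpha(C_m \Box C_n)$, and since $|T| \ge 0$, the identity above yields
\[
\gamma_I(C_m \Box C_n) = mn - |Z| + |T| \ge mn - \alpha(C_m \Box C_n).
\]
For the matching upper bound I would exhibit an explicit function of weight $mn - \alpha$: take a maximum independent set $I$, assign $0$ to every vertex of $I$ and $1$ to every other vertex, using no $2$'s. The only vertices whose condition must be checked are those assigned $0$, i.e.\ those in $I$. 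For $(i,j) \in I$ the two inneighbours are $(i-1,j)$ and $(i,j-1)$; both are also (undirected) neighbours of $(i,j)$, so by independence of $I$ neither lies in $I$, hence both are assigned $1$. Thus every zero has two inneighbours assigned $1$, the function is Italian dominating, and its weight is $mn - |I| = mn - \alpha(C_m \Box C_n)$, giving the reverse inequality.

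Combining the two bounds gives $\gamma_I(C_m \Box C_n) = mn - \alpha(C_m \Box C_n)$, and substituting the three values of $\alpha$ produces the stated formulas; for instance $mn - \tfrac{m(n-1)}{2} = \tfrac{m(n+1)}{2}$ covers both the mixed-parity and the odd--odd cases, while $mn - \tfrac{mn}{2} = \tfrac{mn}{2}$ covers the even--even case. The convention $m \ge n$ in the odd--odd case is inherited directly from the independence-number lemma, and the remaining parity configuration (e.g.\ $m$ odd, $n$ even) is obtained through the isomorphism $C_m \Box C_n \cong C_n \Box C_m$. I expect the real difficulty to lie not in this theorem but in the earlier structural lemmas, which do the work of showing an optimal function may be chosen with an independent zero-set; once that is in hand, both the lower and upper bounds are routine. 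The only point requiring care within the theorem itself is the bookkeeping across parity classes, together with the observation that in a Cartesian product of directed cycles each inneighbour of a vertex is also an undirected neighbour, so that the domination condition for the zero-set is exactly the independence condition exploited in the construction.
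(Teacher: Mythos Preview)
Your proposal is correct and follows essentially the same approach as the paper: invoke Lemma~\ref{0indep} to get a $\gamma_I$-function with independent zero-set for the lower bound, then construct the upper bound by assigning $0$ on a maximum independent set and $1$ elsewhere. Your weight identity $|f| = mn - |Z| + |T|$ makes explicit what the paper leaves implicit in its one-line bound $|f| \ge mn - \alpha(C_m \Box C_n)$, and your remark that each inneighbour is also an undirected neighbour spells out precisely why independence of $I$ suffices for the Italian condition, but otherwise the arguments coincide.
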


\begin{proof}
Let $f$ be a $\gamma_I(C_m \Box C_n)$-function.  By Lemma~\ref{0indep}, there exists a $\gamma_I(C_m \Box C_n)$-function with no pair of adjacent vertices assigned zero.  Hence, $|f| \ge mn - \alpha(C_m \Box C_n)$, which by Lemma~\ref{alpha}, are precisely the values given in the theorem statement.  It remains to establish the existence of an Italian dominating function of this size.  Let $I$ be an independent set of size $mn - \gamma_I(C_m \Box C_n)$ which is shown to exist by Lemma~\ref{alpha}.  Define $f$ as follows:
\[
f((x, y)) = \begin{cases}
1, & (x, y) \notin I; \\
0, & (x, y) \in I.
\end{cases}
\]
It is clear that $f$ is an Italian dominating function, as every vertex in $I$ has its two inneighbours not in $I$ by definition, and hence have value 1.  The result follows.
\end{proof}

\end{document}